\theoremstyle{plain}% Theorem-like structures provided by amsthm.sty
\newtheorem{theorem}{Theorem}[section]
\newtheorem{lemma}[theorem]{Lemma}
\theoremstyle{definition}
\newtheorem{example}[theorem]{Example}
\theoremstyle{remark}
\newtheorem{remark}{Remark}
\begin{document}

%\articletype{ARTILE TEMPLATE}% Specify the article type or omit as appropriate

\title{ Method of Quasidifferential Descent in the Problem of \\ Bringing a Nonsmooth System from One Point to Another }

\author{
\name{A.~V. Fominyh \textsuperscript{a}\thanks{The work was supported by the Russian Science Foundation (project no. 21-71-00021)}}
\affil{\textsuperscript{a}St. Petersburg State University, St. Petersburg, Russia.}
}

\maketitle

\begin{abstract}
The paper considers the problem of constructing program control for an object described by a system with a quasidifferentiable right-hand side. The control aim is to bring the system from a given initial position to a given final state in given finite time. The admissible controls are piecewise continuous vector-functions with values from a parallelepiped. The original problem is reduced to unconditional minimization of a functional. Herewith, the new technical idea is implemented to consider phase trajectory and its derivative as independent variables (and to take the natural relation between them into account via a special penalty function). This idea qualitatively simplified the quasidifferential structure and allowed to overcome the principal difficulties in constructing the steepest descent direction. The quasidifferentiability of the functional is proved, necessary conditions for its minimum are obtained in terms of quasidifferential. In contrast to the existing ones, due to the mentioned idea to ``separate'' the trajectory and its derivative the obtained optimality conditions in the paper are pointwise which may be effectively checked at discrete time moments. In order to solve the obtained minimization problem in the functional space the quasidifferential descent method is applied. Then the discretization is implemented. In contrast to majority of existing methods when the initial problem is discretized, here the discretization is implemented after the quasidifferential is already obtained. It is proved
that in order to construct the quasidifferential descent direction one has
to find the Hausdorff deviation of one convex compact (minus superdifferential)
from another convex compact (the subdifferential) at each time moment of the
discretization and to implement the corresponding interpolation. The quasidifferential descent directions are calculated independently at each time moment of discretization due to the comparatively simple quasidifferential structure, possible to obtain via the technical idea noted. Herewith, the functional is constructed in such a way that it is possible to check whether the resulting stationary point is its global minimum point. The algorithm developed is demonstrated by examples. The proposed method can be applied to nonsmooth optimal control problem in Lagrange form (additionally the integral with a quasidifferentiable integrand is to be minimized). The method advantages and disadvantages are discussed in details.
%The paper considers the problem of constructing program control for an object described by a system with nonsmooth (but only quasidifferentiable) right-hand side. The goal of control is to bring such a system from a given initial position to a given final state in certain finite time. The admissible controls are piecewise continuous and bounded vector-functions with values from some parallelepiped. The original problem is reduced to unconditional minimization of some penalty functional which takes into account constraints in the form of differential equations, constraints on the initial and the final positions of the object as well as constraints on controls. Moreover, it is known that this functional vanishes on the solution of the original problem and only on it. The quasidifferentiability of this functional is proved, necessary and sufficient conditions for its minimum are written out in terms of quasidifferential. Further, in order to solve the obtained minimization problem in the functional space the method of quasidifferential descent is applied. The algorithm developed is demonstrated by examples.
\end{abstract}

\begin{keywords}
Nonsmooth right hand-side; program control; quasidifferential; quasidifferential descent
\end{keywords}
\bigskip

\section{Introduction}
Despite the rich arsenal of methods accumulated over the more than 60-year history of the development of optimal control theory, most of them deal with classical systems whose right-hand sides are continuously differentiable functions of their arguments. There are approaches that do not require these smoothness conditions on systems. However, they usually use direct discretization or some kind of ``smoothing'' process; both of these approaches lead to losing some of the information about  ``behavior'' of the system as well as to finite-dimensional problems of huge dimensions. The paper presented is aimed at solving the problem of bringing a nonsmooth (but only quasidifferentiable) system from one point to another. The relevance of considering such systems is due to their ability to more accurately and more fully describe the ``behavior'' of an object in many cases.

In order to solve the problem of this paper, we will use a combination of reducing the original problem to the problem of minimizing a functional in some functional space as well as the apparatus of quasidifferential calculus. The concept of ``quasidifferential'' was introduced by V. F. Demyanov. A rich and constructive calculus has been developed for this nonsmooth optimization object (see \cite{demrub}). In a finite-dimensional case quasidifferntiable functions are those whose directional derivative may be represented as a sum of the maximum of the scalar product of the direction and the vector from a convex compact set (called a subdifferential) and of the minimum of the scalar product of the direction and the vector from a convex compact (called a superdifferential). The pair of a subdifferential and a superdifferential is called a quasidifferential. (See the strict definition of quasidifferential applied to the right-hand sides of the system below). The class of quasidifferentiable functions is wide. In particular, it includes all the functions that can be represented as a superposition of the finite number of maxima and minima of continuously differentiable functions. The concept of quasidifferential was generalized onto functional spaces in the works of M. V. Dolgopolik (see, e. g.  \cite{dolgopolik1}, \cite{Dolgopolikcodiff}).

Let us make a brief review of some papers on nonsmooth control problems. Such works as \cite{Vinter1}, \cite{Vinter2}, \cite{Frankowska}, \cite{Mordukhovich}, \cite{Ioffe}, \cite{Shvartsman} are devoted to classical necessary optimality conditions in the form of the maximum principle for nonsmooth control problems in various formulations (including the case of the presence of phase constraints). In paper \cite{ito} the minimum conditions in the form of Karush---Kuhn---Tucker are obtained  for nonsmooth problems of mathematical programming in a general problem statement with applications to nonsmooth problems of optimal control. In paper \cite{olivera} on the basis of ``maximum principle invexity'' some sufficient conditions are obtained for nonsmooth control problems. The author of the paper presented also constructed some theoretical results in the problem of program control in systems whose right hand-sides contain modules of linear functions; 
in this problem the necessary minimum conditions are obtained in terms of quasidifferentials (see \cite{Fominyh2}). For the first time, quasidifferential (in the finite-dimensional case) was used to study nonsmooth control problems in work \cite{demshablnik}. The works listed are mainly of theoretical nature; and it is difficult to apply rather complex minimum conditions obtained there to specific control problems with systems with nonsmooth right-hand sides. Let us mention some works devoted directly to the construction of numerical methods for solving a problem similar to that considered in this paper. The author of this paper used the methods of subdifferential and hypodifferential descents earlier to construct optimal control with a subdifferentiable cost functional and the system with a continuously differentiable right-hand side (\cite{Fominyh1}) as well as to solve the problem of bringing a continuously differentiable system from one point to another (in paper \cite{Fominyh3}). In work \cite{Fominyh4} a finite-dimensional version of the quasidifferential descent method was applied to the optimal control problem in Mayer form with smooth right-hand side of a system and with a nondifferentiable objective functional. In work \cite{outrata} the optimal control problem is considered which assumes, roughly speaking, the subdifferentiability of an objective functional and continuous differentiability of right-hand sides of a system. The approach of this work is based on minimization of the discretized augmented cost functional via bundle methods. In papers \cite{Gorelik1}, \cite{Gorelik2} the minimax control problem is considered; with the help of a specially constructed smooth penalty function it is reduced to a classical continuously differentiable problem. In work \cite{Morzhin} a subdifferentiable penalty function is constructed in order to take the constraints on control into account; after that the subdifferential smoothing process is also used. In papers \cite{Mayne1}, \cite{Mayne2} the exact penalty function constructed (in order to take phase constraints into account) is also subdifferentiable; an algorithm for minimizing the derivative of this function with respect to the direction is considered. After ``transition'' to continuously differentiable problems a widely developed arsenal for solving classical control problems can be used in order to solve them. In works \cite{Skandari1}, \cite{Skandari2} more general problems are considered in which the nonsmoothness of right-hand sides of the system describing a controlled object is allowed. Here with the help of basis functions (Fourier series) the smoothing of system right-hand sides is also carried out, after which the Chebyshev pseudospectral method is used to solve the problem. In paper \cite{RossFahroo} the direct method for optimizing trajectories of nonsmooth optimal control problems is proposed based on the Legendre's pseudospectral method.

The method considered in this paper belongs to the so called direct methods of the variational analysis (see \cite{demtam}). The method is also ``continuous'' unlike most methods in literature as it is not based on direct discretization of the original problem. Although similar methods have already been applied to some problems of variational calculus and optimal control, so far it was impossible to apply this  method to nonsmooth control problems. The main difficulty was in a too complicated form of  quasidifferentials and optimality conditions obtained. The new technical idea of the current paper is to consider phase trajectory and its derivative as independent variables (and to take the natural relation between these variables into account via penalty function of a special form). To the best of the author's knowledge, this idea is used in literature for the first time. It allowed to simplify the quasidifferential structure of the functional under consideration and to solve the problem of finding the steepest descent direction of the minimized functional. 

\section{Basic Definitions and Notation}
The paper is organized as follows. Section 2 contains the notation of the paper as well as definitions of the quasidifferentials of the functions (the functionals) required. In Section~3 the problem statement and the main assumptions are presented. In Section~4 the original problem is reduced to the unconstrained minimization one. The quasidifferentiability of the main functional is proved in Section 5; after that minimum conditions for the unconstrained problem are obtained. The quasidifferential descent method is described in Section 6. In this section we also discuss the methods for solving the auxiliary problems arising during the basic algorithm implementation. Justification of possibility of finding the steepest descent direction at discrete time moments is carried out in the section as well. Section 7 contains the numerical examples illustrating the method realization with a rather detailed analysis of the problems considered. In Section 8 advantages and disadvantages of the method are discussed. Section~9 summarizes the main results of the paper. 
%Finally, Appendix contains some known quasidifferential rules description applied to the specific functional considered.    

In the paper we will use both quasidifferentials of functions in a finite-dimensional space and quasidifferentials of functionals
in a functional space. Despite the fact that the second concept generalizes the first one, for convenience we introduce
definitions for both of these cases and for those specific functions (functionals) and their variables and spaces which are
considered.

Consider the space $\mathbb R^n \times \mathbb R^m$ with the standard norm. Let $g = [g_1, g_2] \in \mathbb R^n \times \mathbb R^m$ be an arbitrary vector. Suppose that at every time moment $t \in [0, T]$ at the point $(x, u)$ there exist such convex compact sets $\underline \partial f_i(x,u,t)$, $\overline \partial f_i(x,u,t)$ $\subset \mathbb R^n \times \mathbb R^m$, $i = \overline {1, n}$, that 
\begin{equation}
\label{5.3} 
\frac{\partial f_i(x,u,t)}{\partial g} = \lim_{\alpha \downarrow 0} \frac{1}{\alpha} \Big(f_i(x+\alpha g_1, u + \alpha g_2, t) - f_i(x,u,t)\Big) = \end{equation}
$$
= \max_{v \in \underline \partial f_i(x,u,t)} \langle v, g \rangle + \min_{w \in \overline \partial f_i(x,u,t)} \langle w, g \rangle, \quad i = \overline {1, n}. 
$$

In this case the function $f_i(x,u,t)$, $i = \overline{1,n}$, is called quasidifferentiable at the point $(x, u)$ and the pair $\mathcal{D}f_i(x, u) = \big[ \underline \partial f_i(x,u,t),  \overline \partial f_i(x,u,t) \big]$ is called a quasidifferential of the function $f_i(x,u,t)$ (herewith, the sets $\underline \partial f_i(x,u,t)$ and $\overline \partial f_i(x,u,t)$ are called a subdifferential and a superdifferential respectively of the function $f_i(x,u,t)$ at the point $(x,u)$). 

From expression (\ref{5.3}) one can see that at each $t \in [0, T]$ the following formula holds true:
\begin{equation}
\label{5.3''} f_i(x + \alpha g_1, u + \alpha g_2, t) = f_i(x, u, t) + \alpha  \frac{\partial f_i(x,u,t)}{\partial g} + o_i(\alpha, x, u, g, t), \end{equation}
$$\quad  \frac{o_i(\alpha, x, u, g, t)}{\alpha} \rightarrow 0, \ \alpha \downarrow 0, \quad i = \overline {1, n}.$$

If for each number $\varepsilon > 0$ there exist such numbers $\delta > 0$ and $\alpha_0 > 0$ that at $\overline g \in B_{\delta}(g)$ and at $\alpha \in (0, \alpha_0)$ one has $| o_i(\alpha, x, u, \overline g, t) | < \alpha \varepsilon $, $i = \overline {1, n}$, then the function $f_i(x,u,t)$, $i = \overline {1, n}$, is called uniformly quasidifferentiable at the point $(x, u)$. Note \cite{demvas} that if at each $t \in [0, T]$ the function $f_i(x,u,t)$, $i = \overline {1, n}$, is quasidifferentiable at the point $(x, u)$ and is locally Lipschitz continuous in the vicinity of the point $(x, u)$, then it is uniformly quasidifferentiable at the point~$(x, u)$. If for the uniformly quasidifferentiable function $f_i(x,u,t)$, $i = \overline {1, n}$, in expression (\ref{5.3''}) one has $\displaystyle{\frac{o_i(\alpha, x, u, g, t)}{\alpha} \rightarrow 0}$, $\alpha \downarrow 0$, $i = \overline {1, n}$, uniformly in $t \in [0, T]$, then such a function is called absolutely uniformly quasidifferentiable.
%Будем также считать, что функция $\displaystyle{\frac{\partial f_i(x,u,t)}{\partial g}}$ измерима по Лебегу по $t \in [0, T]$ при любых фиксированных векторах $(x, u)$ и $g$. 
\newline

Consider the space $C_n[0, T] \times P_n[0, T] \times P_m[0, T]$ with the following norm: \linebreak $L^2_n [0, T] \times L^2_n [0, T] \times L^2_m [0, T]$. Let $g = [g_1, g_2, g_3] \in C_n[0, T] \times P_n[0, T] \times P_m[0, T]$ be an arbitrary vector-function. Suppose that at the point $(x, z, u)$ exist such convex weakly* compact sets $\underline \partial {I(x, z, u)}, \overline \partial {I(x, z, u)} \subset \big( C_n[0, T] \times P_n[0, T] \times P_m[0, T],$ $|| \cdot ||_{L^2_n [0, T] \times L^2_n [0, T] \times L^2_m [0, T]} \big) ^*$ that  
\begin{equation}
\label{5.3'} 
\frac{\partial I(x, z, u)}{\partial g} = \lim_{\alpha \downarrow 0} \frac{1}{\alpha} \Big(I(x+\alpha g_1, z+\alpha g_2, u + \alpha g_3) - I(x, z, u)\Big) =  \end{equation}
$$
= \max_{v \in \underline \partial I(x, z, u)} v(g) + \min_{w \in \overline \partial I(x, z, u)} w(g).
$$

In this case the functional $I(x,z,u)$ is called quasidifferentiable at the point $(x, z, u)$ and the pair $\mathcal{D}I(x, z, u) = \big[ \underline \partial {I(x, z, u)},  \overline \partial {I(x, z, u)} \big]$ is called a quasidifferential of the functional $I(x, z, u)$ (herewith, the sets $\underline \partial {I(x, z, u)}$ and $\overline \partial {I(x, z, u)}$ are called a \linebreak subdifferential and a superdifferential respectively of the functional $I(x,z,u)$ at the point $(x,z,u)$). 

From expression (\ref{5.3'}) one can see that the following formula holds true:
\begin{equation}
\label{5.6''} 
I(x + \alpha g_1, z + \alpha g_2, u + \alpha g_3) = I(x, z, u) + \alpha  \frac{\partial I(x,z,u)}{\partial g} + o(\alpha, x, z, u, g),
\end{equation} 
$$ \quad  \frac{o(\alpha, x, z, u, g)}{\alpha} \rightarrow 0, \ \alpha \downarrow 0.$$

\section{Statement of the Problem}
Consider the system of ordinary differential equations
\begin{equation}
\label{5.1}
\dot{x}(t) = f(x(t),u(t),t) 
\end{equation}
with the initial point
\begin{equation}
\label{5.2}
x(0) = x_{0}.
\end{equation}
%и конечным условием
%\begin{equation} 
%\label{3} 
%%(x_{i_1}(T), \dots, x_{i_p}(T)) = x_T, \quad \{i_1, \dots i_p\} \subset \{1, \dots, n\}, \ p < n. 
%\Phi(x(T)) = {\bf 0}_p.
%\end{equation}

In formula (\ref{5.1}) $f(x,u,t)$, $t~\in~[0, T]$, is a given $n$-dimensional vector-function; $T > 0$ is a known finite time moment. In formula (\ref{5.2}) $x_0 \in \mathbb R^n$ is a given vector. 

%\begin{assumption} 
The $n$-dimensional vector-function $x(t)$ of phase coordinates is assumed to be piecewise continuously differentiable on $[0, T]$. The $m$-dimensional vector-function $u(t)$ of controls is supposed to be piecewise continuous on $[0, T]$. The vector-function $f(x,u,t)$ is supposed to be continuous on its domain; and each of its components $f_i(x, u, t)$, $i = \overline {1, n}$, --- to be quasidifferentiable and locally Lipschitz continuous in the pair $(x, u)$ of variables at $t \in [0, T]$. 
%\end{assumption} 

Under the assumptions made for system (\ref{5.1}), (\ref{5.2}), the classical solution existence and uniqueness theorems hold true, at least, in some neighborhood of the initial point. 

As noted above, we assume that each trajectory $x(t)$ is a piecewise continuously differentiable vector-function and $u(t)$ is a piecewise continuous vector-function on~$[0, T]$. If $t_0 \in [0, T)$ is a discontinuity point of the vector-function $u(t)$, then for definiteness we assume that $u(t_0) = \lim \limits_{t \downarrow t_0} u(t)$. At the point $T$ put $u(T) = \lim \limits_{t \uparrow T} u(t)$. With the assumptions and the notations made we can suppose that the vector-function $x$ belongs to the space $C_n [0, T]$, the vector-function~$\dot x$ belongs to the space $P_n [0, T]$ and the vector-function $u$ belongs to the space $P_m [0, T]$.

Introduce the set of admissible controls
\begin{equation} 
\label{5.4''} 
U  = \Big\{ u \in P_m[0, T] \ \big| \ \underline{u}_i \leqslant u_i(t) \leqslant \overline{u}_i, \ i = \overline{1, m}, \ t \in [0, T] \Big\}.
\end{equation}
Here $\underline{u}_i, \overline{u}_i \in \mathbb R, i = \overline{1, m}$, are given numbers.

{\bf Constrained Control Problem.} It is required to find such a control $u^{*} \in U$ that brings the corresponding (in the sense of equation (\ref{5.1})) trajectory~$x^* \in C_n[0, T]$ from initial point~(\ref{5.2}) to the final state 
\begin{equation} 
\label{5.5}
x(T) = x_{T},
\end{equation}
where $x_T \in \mathbb R^n$ is a given vector. 

%\begin{assumption} 
We suppose that there exists such a control $u^* \in U$ (and the corresponding trajectory $x^* \in C_n[0,T]$). 
%\end{assumption}

\section{Reduction to a Variational Problem}
The aim of this section is to reduce the {\bf Constrained Control Problem} stated above to {\bf Unconstrained Variational Problem} below.
Construct the functional taking into account different constraints on the object and on control which are given in the statement of the problem. Let $z(t) = \dot x (t)$ (under the assumptions made, $z \in P_n[0,T]$), then according to (\ref{5.2}) (where the initial state of the system is given) we have \begin{equation} 
\label{5.6} \displaystyle{x(t) = x_0 + \int_0^t z(\tau) d \tau}. \end{equation}
 
Construct the following functional on the space $P_n[0, T] \times P_m[0, T]$:
$$ \mathcal I(z, u) = \sum_{i=1}^n \int_0^T \Big| z_i (t) - f_i\Big(x_0 + \int_0^t z(\tau) d \tau, u(t), t\Big) \Big | dt + \frac{1}{2} \Big( x_0 + \int_0^T z(t) dt - x_T \Big)^2 + $$
$$ + \sum_{i=1}^m \int_0^T \max \Big\{\underline{u}_i - u_i(t), 0 \Big\} dt + \sum_{i=1}^m \int_0^T \max \Big\{{u}_i(t) - \overline{u}_i(t), 0 \Big\} dt. $$
In the functional $\mathcal I(z, u)$ the first summand (which is a sum) takes into account differential constraint (\ref{5.1}), the second summand takes into account constraint (\ref{5.5}) on the final state of the system, the third summand (consisting of two sums) takes into account constraint (\ref{5.4''}) on control. Note that this functional is nonnegative for any of its arguments and $\mathcal I (z^*, u^*)  = 0$ iff the pair $(x^*, u^*) \in C_n[0, T] \times P_m[0, T]$ is a solution of the original problem, i. e. the control $u^*$ belongs to the set $U$ of admissible controls and brings the corresponding trajectory $\displaystyle{x^*(t) = x_0 + \int_0^t z^*(\tau) d \tau}$ from the given initial position $x_0$ to the given final state $x_T$ in the time~$T$.

Transition to the ``space of derivatives'' $\left( z \in P_n[0, T] \right)$ has been used in many works of V.~F.~Demyanov and his students to study various variational and control problems. Under some natural additional assumptions one can
prove the quasidifferentiability of the functional~$\mathcal I (z, u)$ in the space $P_n[0, T] \times P_m[0, T]$ as a normed space with the norm $L^2_n [0, T] \times L^2_m [0, T]$. However, the quasidifferential of this functional has a rather complicated structure which makes it practically unsuitable for constructing numerical methods. Therefore, it is proposed to consider some
modification of this functional, ``forcibly'' considering the points $z$ and $x$ to be ``independent'' variables. Since, in fact, there is relationship (\ref{5.6}) between these variables (which naturally means that the vector-function $z(t)$ is a derivative of the vector-function $x(t)$), let us take it into account by adding the corresponding (last) term when constructing the new functional on the space
 $C_n[0, T] \times P_n[0, T] \times P_m[0, T]$:   
$$
I(x, z, u) = I_1(x, z, u) + I_2(z) + I_3(u) + I_4(x, z) = $$
\begin{equation} 
\label{5.7}
= \sum_{i=1}^n \int_0^T \Big| z_i (t) - f_i(x(t), u(t), t) \Big| dt + \frac{1}{2} \Big( x_0 + \int_0^T z(t) dt - x_T \Big)^2 + 
\end{equation}
$$ + \sum_{i=1}^m \int_0^T \max \Big\{\underline{u}_i - u_i(t), 0 \Big\} dt + \sum_{i=1}^m \int_0^T \max \Big\{{u}_i(t) - \overline{u}_i, 0 \Big\} dt + $$ $$ + \frac{1}{2} \int_0^T \Big( x(t) - x_0 - \int_0^t z(\tau) d \tau \Big)^2 dt. $$
Note that this functional is also nonnegative for any of its arguments and $ I (x^*, z^*, u^*)  = 0$ iff the pair $(x^*, u^*) \in C_n[0, T] \times P_m[0, T]$ is a solution of the original problem, i. e. the control $u^*$ belongs to the set $U$ of admissible controls and brings the corresponding trajectory $\displaystyle{x^*(t) = x_0 + \int_0^t z^*(\tau) d \tau}$ from the given initial position $x_0$ to the given final state $x_T$ in the time $T$. It is obvious that if some of the right endpoint coordinates of an object are free, then we put the corresponding summands of the functional $I_2(z)$ equal to zero. It is also obvious that if some of the restrictions on controls are absent, one has to remove the corresponding summands from the functional~$I_3(u)$. In both these cases we keep for the functional $I(x,z,u)$ its notation.  

Despite the fact that the dimension of functional $I(x, z, u)$ arguments is $n$ more the dimension of functional $\mathcal I(z, u)$ arguments, the structure of its quasidifferential (in the space $C_n[0, T] \times P_n[0, T] \times P_m[0, T]$ as a normed space with the norm \linebreak $L^2_n [0, T] \times L^2_n [0, T] \times L^2_m [0, T]$), as will be seen from what follows, is much simpler than the structure of the functional $\mathcal I(z, u)$ quasidifferential. This will allow us to construct
a numerical method for solving the original problem. 
\newline
\newline

{\bf Unconstrained Variational Problem.}
So the initial problem has been reduced to finding an unconstrained global minimum point $(x^*, z^*, u^*)$ of the functional $I(x, z, u)$ on the space $$X = \Big( C_n[0, T] \times P_n[0, T] \times P_m[0, T], || \cdot ||_{L^2_n [0, T] \times L^2_n [0, T] \times L^2_m [0, T]} \Big).$$

\begin{remark} \label{rm5.1}
Note the following fact. Since, as is known, the space $\Big( C_n[0, T], || \cdot ||_{L^2_n [0, T]} \Big)$ is everywhere dense in the space ${L^2_n [0, T]}$ and the space $\Big( P_n[0, T], || \cdot ||_{L^2_n [0, T]} \Big)$ is also everywhere dense in the space ${L^2_n [0, T]}$, then the space $X^*$ conjugate to the space $X$ introduced in the previous paragraph is isometrically isomorphic \cite{kolfom} to the space ${L^2_n [0, T] \times L^2_n [0, T] \times L^2_m [0, T]}$.
\end{remark}

\section{Necessary Minimum Conditions }
Let us formulate a minimum condition for the functional $I(x, z, u)$ that follows from its construction. Recall that the functional $I(x, z, u)$ is defined on the space \linebreak $C_n[0, T] \times P_n[0, T] \times P_m[0, T]$.

\begin{theorem} \label{th_5.4.1}
Let Assumptions 5.2.1, 5.2.2 be satisfied. In order for the point $(x^*, z^*, u^*)$ to minimize the functional $I(x, z, u)$, it is necessary and sufficient to have $I(x^*, z^*, u^*) =~0$.
\end{theorem}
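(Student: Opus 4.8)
The plan is to read the statement off directly from the two structural properties of the functional $I$ that were already highlighted when it was constructed: $I$ is nonnegative everywhere on $X$, and, under Assumption~2, it vanishes at some point. Once these two facts are in hand, both directions of the equivalence are immediate, so the bulk of the work is simply verifying nonnegativity term by term.

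First I would verify that $I(x, z, u) \geq 0$ for every $(x, z, u) \in X$. This is immediate from the form (\ref{7}): the term $I_1$ is a finite sum of integrals of absolute values, $I_2$ is one half of a squared Euclidean norm, $I_3$ is a sum of integrals of the functions $\max\{\cdot, 0\} \geq 0$, and $I_4$ is one half of the integral of a square. Each of the four summands is therefore nonnegative, and hence so is their sum $I$.

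For the sufficiency direction, suppose $I(x^*, z^*, u^*) = 0$. Since $0$ is a lower bound for $I$ on the whole space $X$ by the nonnegativity just established, and this bound is attained at $(x^*, z^*, u^*)$, the point $(x^*, z^*, u^*)$ is a global minimizer of $I$. For the necessity direction, Assumption~2 enters. By that assumption the original problem admits a solution; putting $z = \dot x$ for this solution and using the property recorded immediately after (\ref{7}) --- namely that $I$ vanishes exactly on solutions of the original problem --- one obtains a triple at which $I = 0$. Together with nonnegativity this shows that the global minimum value of $I$ over $X$ equals $0$ and is attained. Consequently, if $(x^*, z^*, u^*)$ minimizes $I$, its value must coincide with that minimum, so $I(x^*, z^*, u^*) = 0$.

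There is no genuinely hard step here; the one point deserving attention is that the necessity direction truly relies on Assumption~2. Without the existence of an admissible solution, the infimum of $I$ could be strictly positive, in which case a minimizer would carry a positive value and the asserted equivalence would break down. Everything else is a direct, term-by-term reading of the nonnegativity of the four components of $I$ in (\ref{7}).
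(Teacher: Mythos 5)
Your proposal is correct and follows exactly the argument the paper intends: Theorem 5.1 is stated as one that ``follows from its construction,'' meaning precisely the nonnegativity of the four summands of $I$ in (\ref{7}) together with Assumption~2 guaranteeing that the zero value is attained, which is what you spell out. Your explicit remark that necessity genuinely requires Assumption~2 (otherwise a minimizer could have strictly positive value) is the right point of care, and nothing is missing.
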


In order to obtain a more constructive (than that given in Theorem \ref{th_5.4.1}) minimum condition useful for constructing
numerical methods for solving the problem posed, preliminarily, let us investigate the differential properties of the functional $I(x, z, u)$.

Using the classical variation one can directly prove the Gateaux differentiability of the functional $I_2(z)$; we have
% \begin{equation}
% \label{8}
$$
 \nabla I_2(z) = x_0 + \int_0^T z(t) dt - x_T.  
 $$
 % \end{equation}
 By quasidifferential calculus rules \cite{DolgNormed} one may put 
 $$ \mathcal{D} \, I_2(z) = \Big[ \underline \partial I_2(z), \, \overline \partial I_2(z) \Big] := \bigg[ x_0 + \int_0^T z(t) dt - x_T, \ 0_n \bigg]. $$
 
 Formally denote $\displaystyle{ \underline \partial \varphi_2(x, z, u, t) = \left(0_n, x_0 + \int_0^T z(t) dt - x_T, 0_m \right)'}$, \linebreak $\overline \partial \varphi_2(x, z, u, t) = \left(0_n, 0_n, 0_m \right)'$.
 \newline
 
 Using the classical variation and integration by parts one can directly check (cf. \cite{demtam}) the Gateaux differentiability of the functional $I_4(x, z)$; we obtain
% \begin{equation}
% \label{9}
 $$\nabla I_4(x, z) = \begin{pmatrix}
\displaystyle{ x - x_0 - \int_0^t z(\tau) d\tau } \\
\displaystyle{ -\int_t^T \Big( x(\tau) - x_0 - \int_0^\tau z(s) ds \Big) d\tau  }
\end{pmatrix}. 
$$   
% \end{equation}
 
  By quasidifferential calculus rules \cite{DolgNormed} one may put 
 $$ \mathcal{D} \, I_4(x, z) = \Big[ \underline \partial I_4(x, z), \, \overline \partial I_4(x, z) \Big] := \left[ \begin{pmatrix}
\displaystyle{ x - x_0 - \int_0^t z(\tau) d\tau } \\
\displaystyle{ -\int_t^T \Big( x(\tau) - x_0 - \int_0^\tau z(s) ds \Big) d\tau  }
\end{pmatrix}, \ \begin{pmatrix}
0_n \\
0_n  
\end{pmatrix} \right ]. $$

 Formally denote $\overline \partial \varphi_4(x, z, u, t) = \left(0_n, 0_n, 0_m \right)'$,  $\displaystyle{ \underline \partial \varphi_4(x, z, u, t)=}$ $\displaystyle{  = \left(x(t) - x_0 - \int_0^t z(\tau) d\tau, -\int_t^T \Big( x(\tau) - x_0 - \int_0^\tau z(s) ds \Big) d\tau, 0_m \right)'}$.  
 \newline
 
 Study now the differential properties of the functionals $I_1(x, z, u)$ and $I_3(u)$. For this, we prove the following theorem for a functional of a more general form. 
 
\begin{theorem} \label{th5.4.2}
 Let the functional
 $$
 J(\xi) = \int_0^T \varphi(\xi(t), t) dt
 $$
be given where $\xi \in P_l[0, T]$, the function $\varphi(\xi, t)$ is continuous and is also absolutely uniformly quasidifferentiable, with the quasidifferential $[\underline \partial \varphi(\xi, t), \overline \partial \varphi(\xi, t)]$. Suppose also that the mappings $t \rightarrow \underline \partial \varphi(\xi(t), t)$ and $t \rightarrow \overline \partial \varphi(\xi(t), t) $ are upper semicontinuous.  
\newline

Then the functional $J(\xi)$ is quasidifferentiable, i. e. 

1) The derivative of the functional $J(\xi)$ in the direction $g \in P_l[0,T]$ exists and is of the form
\begin{equation}
\label{5.10} 
\frac{\partial J(\xi)}{\partial g} = \lim_{\alpha \downarrow 0} \frac{1}{\alpha} \big(J(\xi+\alpha g) - J(\xi)\big) = \max_{v \in \underline \partial J(\xi)} v(g) + \min_{w \in \overline \partial J(\xi)} w(g),
   \end{equation}
%$$
%\max_{v \in \underline \partial J(\xi)} \int_0^T \langle v(t), g(t) \rangle dt + \min_{w \in \overline \partial J(\xi)} \int_0^T \langle w(t), g(t) \rangle dt
%$$
and the sets $\underline \partial J(\xi)$, $\overline \partial J(\xi)$ are of the form
%\begin{equation}
%\label{12}  
% \underline \partial J(\xi) = \Big\{ v \in L^l_2[0, T] \ \big| \ v(t) \in  \underline \partial \varphi(\xi(t), t) \ \forall t \in [0, T] \Big\}, 
% \end{equation} 
\begin{equation}
\label{5.12} \underline \partial J(\xi) = \Big\{ v \in \left(P_l[0, T], || \cdot ||_{L_l^2} \right)^* \ \big| \  v(g) = \int_0^T \langle \upsilon(t), g(t) \rangle dt \quad \forall g \in P_l[0,T], \end{equation}
$$ \upsilon \in L_l^\infty[0, T], \quad \upsilon(t) \in \underline \partial \varphi(\xi(t), t) \ \forall t \in [0, T] \Big\}.$$
\newline

%\begin{equation}
%\label{13} 
% \overline \partial J(\xi) = \Big\{ w \in L^l_2[0, T]  \ \big| \ w(t) \in  \overline \partial \varphi(\xi(t), t) \ \forall t \in [0, T] \Big\}.
% \end{equation}
  \begin{equation}
\label{5.13}  \overline \partial J(\xi) = \Big\{ w \in \left(P_l[0, T], || \cdot ||_{L_l^2} \right)^* \ \big| \  w(g) = \int_0^T \langle \varpi(t), g(t) \rangle dt \quad \forall g \in P_l[0,T], \end{equation}
$$ \varpi \in L_l^\infty[0, T], \quad \varpi(t) \in \overline \partial \varphi(\xi(t), t) \ \forall t \in [0, T] \Big\}.$$
 
 2) The sets $\underline \partial J(\xi)$, $\overline \partial J(\xi)$ are convex and weakly* compact subsets of the space \linebreak $\big( P_l[0, T], || \cdot ||_{L^2_l [0, T]} \big)^*$. 
 \end{theorem}

%In accordance with definition (\ref{3'}) of a quasidifferentiable functional in order to prove the theorem one has to check that: 
%
%1) the derivative of the functional $J(\xi)$ in the direction $g$ exists and is actually of form (\ref{10}), 
%
%2) herewith, the sets $\underline \partial J(\xi)$, $\overline \partial J(\xi)$ are convex and weakly* compact subsets of the space $\big( P_l[0, T], || \cdot ||_{L_2^l [0, T]} \big)^*$. 
\begin{proof}
Prove statement 1). 

Insofar as the function $\varphi(\xi, t)$ is quasidifferentiable by assumption, then for every $g \in P_l[0,T]$ and for each $\alpha > 0$ we have (see formula (\ref{5.3''}))
\begin{equation}
\label{5.16} 
J(\xi+\alpha g) - J(\xi) =  \int_0^T \max_{v \in \underline \partial \varphi(\xi,t)} \langle v(t), \alpha g(t) \rangle dt + \int_0^T  \min_{w \in \overline \partial \varphi(\xi,t)} \langle w(t), \alpha g(t) \rangle dt +
 \end{equation} 
  $$
 + \int_0^T {o(\alpha, \xi(t), g(t), t)} dt, \quad \frac{o(\alpha, \xi(t), g(t), t)}{\alpha} \rightarrow 0, \ \alpha \downarrow 0, \ t \in [0,T]. 
 $$

At this point let us check that the integrals in the right-hand side of this formula are correctly defined. 

Insofar as $\xi, g \in P_l[0,T]$ and the function $\varphi(\xi, t)$ is continuous, then for each $\alpha > 0$ the functions  $t \rightarrow \varphi(\xi(t), t)$ and $t \rightarrow \varphi(\xi(t) + \alpha g(t), t)$ belong to the space~$L^\infty_1 [0,T]$.

Under the assumption made, the mappings $t \rightarrow \underline \partial \varphi(\xi(t), t)$ and $t \rightarrow \overline \partial \varphi(\xi(t), t)$ are upper semicontinuous and then are also measurable \cite{filblag}. Then due to the piecewise continuity and the boundedness of the function $g(t)$ and due to continuity of the scalar product we obtain that for each $\alpha > 0$ the mappings $t \rightarrow \max_{v(t) \in \underline \partial \varphi(\xi(t),t)} \langle v(t), \alpha g(t) \rangle$ and $t \rightarrow \min_{w(t) \in \overline \partial \varphi(\xi(t),t)} \langle w(t), \alpha g(t) \rangle$ are upper semicontinuous \cite{AubinFrankowska} and then are also measurable~\cite{filblag}. During the statement 2) proof it will be shown that under the assumptions made, the sets $\underline \partial \varphi(\xi,t)$ and $\overline \partial \varphi(\xi,t)$ are bounded uniformly in $t \in [0, T]$, from here taking into account the fact that $g \in P_l[0,T]$, check that for each $\alpha > 0$ the mappings $t \rightarrow \max_{v(t) \in \underline \partial \varphi(\xi(t),t)} \langle v(t), \alpha g(t) \rangle$ and $t \rightarrow \min_{w(t) \in \overline \partial \varphi(\xi(t),t)} \langle w(t), \alpha g(t) \rangle$ are also bounded uniformly in $t \in [0, T]$. Indeed, fix some $g \in P_l[0,T]$ and $\alpha > 0$ and for each $t \in [0,T]$ take such a vector $\overline v(t) \in \underline \partial \varphi(\xi(t),t)$ that $\langle \overline v(t), \alpha g(t) \rangle = \max_{v(t) \in \underline \partial \varphi(\xi(t),t)} \langle v(t), \alpha g(t) \rangle$ (the vector~$\overline v(t)$ exists since for each $t \in [0,T]$ the set $\underline \partial \varphi(\xi(t),t)$ is a convex compact). Then by Cauchy-Schwarz inequality $\langle \overline v(t), \alpha g(t) \rangle \leqslant \alpha ||\overline v(t)||_{\mathbb R^l} ||g(t)||_{\mathbb R^l} $, and the value on the right-hand side is bounded (uniformly in $t \in [0, T]$) since $g \in P_l[0,T]$ and since the set $\underline \partial \varphi(\xi(t),t)$ is bounded uniformly in $t \in [0, T]$. (The justification regarding the mapping $t \rightarrow \min_{w(t) \in \overline \partial \varphi(\xi(t),t)} \langle w(t), \alpha g(t) \rangle$ is carried out in a completely analogous fashion.)  So we finally have that for each $\alpha > 0$ the mappings $t \rightarrow \max_{v(t) \in \underline \partial \varphi(\xi(t),t)} \langle v(t), \alpha g(t) \rangle$ and $t \rightarrow \min_{w(t) \in \overline \partial \varphi(\xi(t),t)} \langle w(t), \alpha g(t) \rangle$ belong to the space $L^\infty_1 [0,T]$. 
\newline
\newline

Then for every $\alpha > 0$ one has $t \rightarrow {o(\alpha, \xi(t), g(t), t)} \in  L^\infty_1 [0,T]$ and due to
the absolutely uniformly quasidifferentiability of the function $\varphi(\xi, t)$ we have 
\begin{equation}
\label{5.11}  \frac{o(\alpha, \xi(t), g(t), t)}{\alpha} =: \frac{o(\alpha)}{\alpha} \rightarrow 0, \ \alpha \downarrow 0. \end{equation}

Now our aim is to ``bring the operations of taking maximum and minimum out of the integral'', i. e. to obtain the expression in the right-hand side of formula (\ref{5.10}).
 
Consider the functional $\displaystyle{\int_0^T \max_{v \in \underline \partial \varphi(\xi,t)} \langle v(t), \alpha g(t) \rangle dt}$ in detail. For simplicity here we identify the vector-functions $v$ and $\upsilon$. For each $\alpha > 0$ and for each $t \in [0, T]$ we have the obvious inequality
$$
\max_{v \in \underline \partial \varphi(\xi,t)} \langle v(t), \alpha g(t) \rangle \geqslant \langle v(t), \alpha g(t) \rangle
$$
where $v(t)$ is a measurable selector of the mapping $t \rightarrow \underline \partial \varphi(\xi(t),t)$ (due to
the noted boundedness property of the set $\underline \partial \varphi(\xi, t)$ uniformly in $t \in [0, T]$ we have $v \in L^\infty_l [0,T]$) and by virtue of formula~(\ref{5.12}) form for every $\alpha > 0$ one has the inequality 
$$
\int_0^T \max_{v \in \underline \partial \varphi(\xi,t)} \langle v(t), \alpha g(t) \rangle dt \geqslant \max_{v \in \underline \partial J(\xi)} \int_0^T \langle v(t), \alpha g(t) \rangle dt. 
$$
Insofar as for each $\alpha > 0$ and for each $t \in [0, T]$ we have 
$$
\max_{v \in \underline \partial \varphi(\xi,t)} \langle v(t), \alpha g(t) \rangle \in \Big\{ \langle v(t), \alpha g(t) \rangle \ \big| \ v(t) \in \underline \partial \varphi(\xi(t),t) \Big\}
$$
and the set $\underline \partial \varphi(\xi,t)$ is closed and bounded at each fixed $t \in [0,T]$ by the definition of subdifferential and the mapping $t \rightarrow \underline \partial \varphi(\xi(t),t)$ is upper semicontinuous by assumption and also as the scalar product is continuous and $g \in P_l[0,T]$, then due to Filippov lemma \cite{Filippov} there exists a measurable selector $\overline{v}(t)$ of the mapping $t \rightarrow \underline \partial \varphi(\xi(t),t)$ that for each $\alpha > 0$ and for each $t \in [0, T]$ one has 
$$
\max_{v \in \underline \partial \varphi(\xi,t)} \langle v(t), \alpha g(t) \rangle = \langle \overline{v}(t), \alpha g(t) \rangle,
$$
so the element $\overline{v} \in \underline \partial J(\xi)$ brings the equality
in the previous inequality. So finally we obtain
\begin{equation}
\label{5.14} 
\int_0^T \max_{v \in \underline \partial \varphi(\xi,t)} \langle v(t), \alpha g(t) \rangle dt = \max_{v \in \underline \partial J(\xi)} \int_0^T \langle v(t), \alpha g(t) \rangle dt. 
\end{equation}

Consideration of the functional $\displaystyle{\int_0^T \min_{w \in \overline \partial \varphi(\xi,t)} \langle w(t), \alpha g(t) \rangle dt}$ is carried out in a completely analogous fashion (here we identify $w$ and $\varpi$). Taking formula (\ref{5.13}) form into account we have
\begin{equation}
\label{5.15} 
\int_0^T  \min_{w \in \overline \partial \varphi(\xi,t)} \langle w(t), \alpha g(t) \rangle dt = \min_{w \in \overline \partial J(\xi)} \int_0^T \langle w(t), \alpha g(t) \rangle dt.
\end{equation}

From expressions (\ref{5.16}), (\ref{5.11}), (\ref{5.14}), (\ref{5.15}) follows formula (\ref{5.10}) (see expression  (\ref{5.6''})). 

Prove statement 2). 

The convexity of the sets $\underline \partial J(\xi)$ and $\overline \partial J(\xi)$ immediately
follows from the convexity at each fixed $t \in [0, T]$ of the sets $\underline \partial \varphi(\xi,t)$ and $\overline \partial \varphi(\xi,t)$ respectively.

Prove the boundedness of the set $\underline \partial \varphi(\xi,t)$ uniformly in $t \in [0, T]$. Due to the upper
semicontinuity of the mapping $t \rightarrow \underline \partial \varphi(\xi(t),t)$ at each $t \in [0, T]$ there
exists such a number~$\delta(t)$ that under the condition $|\overline{t} - t| < \delta(t)$ the inclusion $\underline \partial \varphi(\xi(\overline{t}),\overline{t}) \subset \linebreak \subset B_r(\underline \partial \varphi(\xi({t}),{t}))$ holds true at $\overline{t} \in [0, T]$ where $r$ is some fixed finite positive number. The intervals $D_{\delta(t)}(t)$, $t \in [0, T]$, form open
cover of the segment $[0, T]$, so by Heine-Borel lemma one can take a finite subcover from this cover. Hence, there exists such a number $\delta > 0$ that for every $t \in [0, T]$ the inclusion $\underline \partial \varphi(\xi(\overline{t}),\overline{t}) \subset B_r(\underline \partial \varphi(\xi({t}),{t}))$ holds true once $|\overline{t} - t| < \delta$ and $\overline{t} \in [0, T]$. This means that for the segment $[0, T]$ there exists a finite partition $t_1 = 0, t_2, \dots, t_{N-1}, t_N = T$ with the diameter~$\delta$ such that $\underline \partial \varphi(\xi,{t}) \subset \bigcup\limits_{i=1}^N B_r(\underline \partial \varphi(\xi({t_i}),{t_i}))$ for all $t \in [0, T]$. It remains to notice that the set $\bigcup\limits_{i=1}^N B_r(\underline \partial \varphi(\xi({t_i}),{t_i}))$ is bounded due to the compactness of the set $\underline \partial \varphi(\xi,{t})$ at each fixed $t \in [0, T]$. The boundedness of the set $\overline \partial \varphi(\xi,t)$ uniformly in $t \in [0, T]$ may be proved similarly.

The weak* compactness of the set $\underline \partial J(\xi)$ in the space $\left( P_l[0,T], ||\cdot||_{L^2_l[0,T]} \right)^*$ follows from its weak compactness (in this space) by virtue of these topologies definitions \cite{kolfom}. Prove the weak compactness of the set $\underline \partial J(\xi)$ in the space $\left( P_l[0,T], ||\cdot||_{L^2_l[0,T]} \right)^*$. Note that by virtue of Remark~\ref{rm5.1} it is sufficient to consider the set $\underline \partial J(\xi)$ image (under an isometric isomorphic mapping from $\left( P_l[0,T], ||\cdot||_{L^2_l[0,T]} \right)^*$ to $L^2_{l}[0, T]$) in the space $L^2_{l}[0, T]$. For simplicity denote this image by $\underline \partial J(\xi)$ as well. So our aim now is to prove the weak compactness of the set $\underline \partial J(\xi)$ in the space $L^2_l [0, T]$. The space $L^2_{l}[0, T]$ is reflexive \cite{DunfordSchwarz}, so the set there is weakly compact if and only if it is bounded in norm and weakly closed \cite{DunfordSchwarz} in this space. The boundedness of this set in norm has been proved in the previous paragraph. In the next paragraph we prove that this set is weakly closed. The similar reasoning is valid for the set $\overline \partial J(\xi)$.

Prove that the set $\underline \partial J(\xi)$ is weakly closed. As shown in statement 1) proof
and at the beginning of statement 2) proof, the set $\underline \partial J(\xi)$ is convex and its elements $v$ belong to the space $L^\infty_l [0, T]$. Then all the more the set $\underline \partial J(\xi)$ is a convex subset of the space $L^2_l [0, T]$. Let us prove that the set $\underline \partial J(\xi)$ is closed in the weak topology of the space $L^2_l [0, T]$. Let $\{v_n\}_{n=1}^{\infty}$ be the sequence of functions from the set $\underline \partial J(\xi)$ converging to the function $v^*$ in the
strong topology of the space $L^2_l [0, T]$. It is known \cite{Munroe} that this sequence has the subsequence $\{v_{n_k}\}_{n_k=1}^{\infty}$ converging pointwise to $v^{*}$ almost everywhere on $[0, T]$, i. e. there exists such a subset $T' \subset [0, T]$ having the measure $T$ that for every point $t \in T'$ we have $v_{n_k}(t) \in \underline \partial \varphi(\xi(t), t)$ and  $v_{n_k}(t)$ converges to $v^{*}(t)$, $n_k = 1, 2, \dots$. But the set $\underline \partial \varphi(\xi(t), t)$ is closed at each $t \in [0, T]$ by the definition of subdifferential, hence for every $t \in T'$ we have $v^{*}(t) \in \underline \partial \varphi(\xi(t), t)$. So the set $\underline \partial J(\xi)$ is closed in the strong topology of the space $L^2_l [0, T]$ but it is also convex, so it is also closed \cite{DunfordSchwarz} in the weak topology of the space $L^2_l [0, T]$. One can prove that the set $\overline \partial J(\xi)$ is weakly closed (in $L^2_{l}[0, T]$) in a similar way.

The theorem is proved. 
\end{proof}
\bigskip
\bigskip

\begin{remark}
{\mbox The assumption of the absolute uniform quasidifferentiability is made in order to simplify the presentation. Via a special form of the mean value theorem \cite{DolgConverg} for quasidifferentials one can show that this assumption is actually redundant. }
\end{remark}

Thus, as one can see from Theorem \ref{th5.4.2}, the quasidifferentials of the functionals $I_1(x, z, u)$ and $I_3(u)$ are completely defined by the quasidifferentials of their integrands (at each time moment $t \in [0, T]$). Below there is the detailed description of calculating the quasidifferentials required as well as the main quasidifferential calculus rules.  

With the help of quasidifferential calculus rules \cite{demrub} at each $i \in \{1, \dots, n\}$, $j \in \{1, \dots, m\}$ and at each $t \in [0, T]$ calculate the quasidifferentials below.
$$\mathcal{D} \, \Big |z_i - f_i(x, u, t) \Big| = \left[ \left( \begin{array}{c} 0 \\ \vdots \\  0 \\ 1 \\ 0 \\ \vdots \\ 0 \end{array} \right) - \overline \partial f_i(x, u, t), \ - \underline \partial f_i(x, u, t) \right], $$
if $z_i - f_i(x, u, t) > 0$. Here $1$ is on the $(n+i)$-th place. 

$$\mathcal{D} \, \Big |z_i - f_i(x, u, t) \Big| = \left[ \left( \begin{array}{c} 0 \\ \vdots \\  0 \\ -1 \\ 0 \\ \vdots \\ 0 \end{array} \right) + \underline \partial f_i(x, u, t), \ \overline \partial f_i(x, u, t) \right], $$
if $z_i - f_i(x, u, t) < 0$. Here $-1$ is on the $(n+i)$-th place. 

$$\mathcal{D} \, \Big |z_i - f_i(x, u, t) \Big| = $$
$$ =  \left[ \mathrm{co} \left\{ \left( \begin{array}{c} 0 \\ \vdots \\  0 \\ 1 \\ 0 \\ \vdots \\ 0 \end{array} \right) - 2 \, \overline \partial f_i(x, u, t), \, \left( \begin{array}{c} 0 \\ \vdots \\  0 \\ -1 \\ 0 \\ \vdots \\ 0 \end{array} \right) + 2 \, \underline \partial f_i(x, u, t) \right\}, \ - \underline \partial f_i(x, u, t) +  \overline \partial f_i(x, u, t) \right], $$
if $z_i - f_i(x, u, t) = 0$. Here $1$ and $-1$ are on the $(n+i)$-th place. 

Put $\Big[\underline \partial \varphi_{1}(x, z, u, t), \overline \partial \varphi_{1}(x, z, u, t)\Big] = \mathcal{D}\sum\limits_{i=1}^n \Big|z_i(t) - f_i(x(t), u(t), t) \Big|$.

$$\mathcal{D} \max \Big\{u_j - \overline u_j, 0 \Big\} = \left[ \left( \begin{array}{c} 0 \\ \vdots \\  0 \\ 1 \\ 0 \\ \vdots \\ 0 \end{array} \right), \ 0_m \right], $$
if $u_j - \overline{u}_j > 0$. Here $1$ is on the $j$-th place.  

$$\mathcal{D} \max \Big\{u_j - \overline u_j, 0 \Big\} = \left[ 0_m, \ 0_m \right], $$
if $u_j - \overline{u}_j < 0$. 

$$\mathcal{D} \max \Big\{u_j - \overline u_j, 0 \Big\} = \left[ \mathrm{co} \left\{ \left( \begin{array}{c} 0 \\ \vdots \\  0 \\ 1 \\ 0 \\ \vdots \\ 0 \end{array} \right), \, 0_m \right\}, \ 0_m \right], $$
if $u_j - \overline{u}_j = 0$. Here $1$ is on the $j$-th place.  

$$\mathcal{D} \max \Big\{\underline u_j - u_j, 0 \Big\} = \left[ \left( \begin{array}{c} 0 \\ \vdots \\  0 \\ -1 \\ 0 \\ \vdots \\ 0 \end{array} \right), \ 0_m \right], $$
if $\underline u_j - {u}_j(t) > 0$. Here $-1$ is on the $j$-th place.  

$$\mathcal{D} \max \Big\{\underline u_j - u_j, 0 \Big\} = \left[ 0_m, \ 0_m \right], $$
if $\underline u_j - {u}_j(t) < 0$. 

$$\mathcal{D} \max \Big\{\underline u_j - u_j, 0 \Big\} = \left[ \mathrm{co} \left\{ \left( \begin{array}{c} 0 \\ \vdots \\  0 \\ -1 \\ 0 \\ \vdots \\ 0 \end{array} \right), \, 0_m \right\}, \ 0_m \right], $$
if $\underline u_j - {u}_j(t) = 0$. Here $-1$ is on the $j$-th place.  

Put $\Big[\underline \partial \varphi_{3}(x, z, u, t), \overline \partial \varphi_{3}(x, z, u, t)\Big] = \mathcal{D}\left(\sum\limits_{i=1}^m \max \Big\{u_j(t) - \overline u_j, 0 \Big\} + \sum\limits_{j=1}^m \max \Big\{\underline u_j - u_j(t), 0 \Big\}\right)$.

In the previous paragraph formulas the subdifferentials $\underline \partial f_i(x, u, t)$ and the superdifferentials $\overline \partial f_i(x, u, t)$, $i = \overline {1, n}$, are calculated via quasidifferential calculus apparatus as well. Book \cite{demrub} contains a detailed description of these rules for a rich class of functions. Let us give just some of these rules which were used in the formulas of the previous paragraph. Let $\xi \in \mathbb R^l$. If the function~$\varphi(\xi)$ is quasidifferentiable at the point $\xi_0 \in \mathbb R^l$ and $\lambda$ is some number, then we have
$$ \lambda \, \mathcal{D} \varphi(\xi_0) = \Big[ \lambda \, \underline\partial \varphi(\xi_0), \lambda \, \overline\partial \varphi(\xi_0) \Big], \quad \text{if} \ \lambda \geqslant 0,$$
 $$ \lambda \, \mathcal{D} \varphi(\xi_0) = \Big[ \lambda \, \overline\partial \varphi(\xi_0), \lambda \, \underline\partial \varphi(\xi_0) \Big], \quad \text{if} \ \lambda < 0.$$
If the functions $\varphi_k(\xi)$, $k = \overline{1, r}$, are quasidifferentiable at the point $\xi_0 \in \mathbb R^l$, then the quasidifferntial of the function $\varphi(\xi) = \max_{k = \overline{1, r}} \varphi_k(\xi)$ at this point is calculated by the formula $$ \mathcal{D} \varphi(\xi_0) = \Big[\underline\partial \varphi(\xi_0) ,  \overline\partial \varphi(\xi_0) \Big],$$
$$\underline\partial \varphi(\xi_0) = \mathrm{co}\Bigg\{\underline\partial \varphi_k(\xi_0) - \sum_{i \in P(\xi_0), \ i \neq k} \overline\partial \varphi_i(\xi_0), \  k \in P(\xi_0) \Bigg\} ,$$
$$ \overline\partial \varphi(\xi_0) = \sum_{i \in P(\xi_0)} \overline\partial \varphi_i(\xi_0), $$
$$P(\xi_0) = \Big\{ k \in \{1, \dots, r\} \ \big| \ \varphi_k(\xi_0) = \varphi(\xi_0) \Big\} .$$
If the functions $\varphi_k(\xi)$, $k = \overline{1, r}$, are quasidifferentiable at the point $\xi_0 \in \mathbb R^l$, then the quasidifferntial of the function $\varphi(\xi) = \min_{k = \overline{1, r}} \varphi_k(\xi)$ at this point is calculated by the formula $$ \mathcal{D} \varphi(\xi_0) = \Big[\underline\partial \varphi(\xi_0) ,  \overline\partial \varphi(\xi_0) \Big],$$
$$\underline\partial \varphi(\xi_0) = \sum_{j \in Q(\xi_0)} \underline\partial \varphi_j(\xi_0) ,$$
$$ \overline\partial \varphi(\xi_0) = \mathrm{co}\Bigg\{\overline\partial \varphi_k(\xi_0) - \sum_{j \in Q(\xi_0), \ j \neq k} \underline\partial \varphi_j(\xi_0), \  k \in Q(\xi_0) \Bigg\} , $$
$$Q(\xi_0) = \Big\{ k \in \{1, \dots, r\} \ \big| \ \varphi_k(\xi_0) = \varphi(\xi_0) \Big\} .$$
Note also that if the function $\varphi(\xi)$ is subdifferentiable at the point $\xi_0 \in \mathbb R^l$, then its quasidifferential at this point  may be represented in the form $$\mathcal{D} \varphi(\xi_0) = \Big[ \underline\partial \varphi(\xi_0), 0_l \Big],$$ and if the function  $\varphi(\xi)$ is superdifferentiable at the point $\xi_0 \in \mathbb R^l$, then its quasidifferential at this point  may be represented in the form $$\mathcal{D} \varphi(\xi_0) = \Big[ 0_l, \overline\partial \varphi(\xi_0) \Big].$$ These two formulas can be taken as definitions of a subdifferentiable and a superdifferentiable function respectively. If the function $\varphi(\xi)$ is differentiable at the point $\xi_0 \in \mathbb R^l$, then its quasidifferential may be represented in the forms $$\mathcal{D} \varphi(\xi_0) = \Big[\varphi'(\xi_0), 0_l \Big] \ \mathrm{or} \ \mathcal{D} \varphi(\xi_0) = \Big[0_l, \varphi'(\xi_0) \Big], $$ where $\varphi'(\xi_0)$ is a gradient of the function $\varphi(\xi)$ at the point $\xi_0$. The latter fact indicates that there is not the only way to construct quasidifferential.
We also note that the subdifferential (the superdifferential) of the finite sum of quasidifferentiable functions is a sum of subdifferentials (superdifferentials) of the summands, i. e. if the functions $\varphi_k(\xi)$, $k = \overline{1, r}$, are quasidifferentiable at the point $\xi_0 \in \mathbb R^l$, then the quasidifferential of the function $\varphi(\xi) = \sum_{k=1}^r \varphi_k(\xi)$ at this point is calculated by the formula $$ \mathcal{D} \varphi(\xi_0) = \left[  \sum_{k=1}^r \underline\partial \varphi_k(\xi_0) ,  \sum_{k=1}^r \overline\partial \varphi_k(\xi_0) \right].$$

Via the rules given and formulas (\ref{5.12}) and (\ref{5.13}) we find the quasidifferentials $\mathcal{D} I_1(x, z, u)$ and $\mathcal{D} I_3(u)$.

We have the following final formula \cite{DolgNormed} for calculating the quasidifferential of the functional $I(x, z, u)$ at the point $(x, z, u)$
\begin{equation}
\label{5.16'}
\mathcal{D} \, I(x, z, u) = \Big[ \underline \partial I(x, z, u), \, \overline \partial I(x, z, u) \Big] = \left[ \sum_{k=1}^4 \underline \partial I_k(x, z, u), \ \sum_{k=1}^4 \overline \partial I_k(x, z, u) \right]
\end{equation} 
where formally $I_2(x, z, u) := I_2(z)$, $I_3(x, z, u) := I_3(u)$, $I_4(x, z, u) := I_4(x, z)$.

Let us formally denote $\underline \partial \varphi(\xi, t) = \sum\limits_{i=1}^4 \underline \partial \varphi_i(x, z, u, t)$, \linebreak $\overline \partial \varphi(\xi, t) = \sum\limits_{i=1}^4 \overline \partial \varphi_i(x, z, u, t)$.

Using the known minimum condition \cite{Dolgopolikcodiff} (of the functional $I(x, z, u)$ at the point $(x^*, z^{*}, u^{*})$ in this case) in terms of quasidifferential, we conclude that the following theorem holds true.

\begin{theorem} \label{th5.4.3}
Let Assumptions 5.2.1, 5.2.2 be satisfied. In order for the control $u^{*} \in U$ to bring system~(\ref{5.1}) from initial point (\ref{5.2}) to final state (\ref{5.5}) in the time $T$, it is necessary that for each measurable selection $w(\cdot)$ of the multivalued mapping $t \rightarrow \overline \partial \varphi(\xi^*, t)$ the following inclusion
\begin{equation}
\label{5.17}
- w(t) \in \underline \partial \varphi(\xi^*, t)
%- \overline \partial I (x^*, z^{*}, u^{*}) \subset \underline \partial I (x^*, z^{*}, u^{*})
\end{equation}
holds true at almost each $t \in [0, T]$. 
%where the expressions for the subdifferential $\underline \partial I (x, z, u)$ and for the superdifferential $\overline \partial I (x, z, u)$ are given by formula (\ref{16'}). 

If one has $I(x^*, z^{*}, u^{*}) = 0$, then condition (\ref{5.17}) is also sufficient.
\end{theorem}

\begin{remark}
Strictly speaking, minimum condition (\ref{5.17}) is formulated in the paper \cite{Dolgopolikcodiff} for a functional defined on another space, however, from the proof of that paper it is clear that this result remains valid for the case of the space considered in the paper.
\end{remark}

Theorem \ref{th5.4.3} already contains a constructive minimum condition since on its basis it is possible to construct the quasidifferential descent method; and for solving each of the subproblems arising during realization of this method (for a wide class of functions) there are known efficient algorithms for solving them. 

Once the steepest (the quasidifferential) descent direction has been constructed (see Section 6), one can apply some a numerical method (based on using this direction) of nonsmooth optimization in order to find stationary points of the functional $I(x, z, u)$. The steepest (quasidifferential) descent algorithm is used for numerical simulations of the paper.

\section{Quasidifferential Descent Method}

Let us describe the following quasidifferential descent method for finding stationary points of the functional $I(x, z, u)$. 

Fix an arbitrary initial point $(x_{(1)}, z_{(1)}, u_{(1)})$ from the space \linebreak $C_n[0, T] \times P_n[0, T] \times P_m[0, T]$. Let the point $(x_{(k)}, z_{(k)}, u_{(k)})$ from the space $C_n[0, T] \times P_n[0, T] \times P_m[0, T]$ be already constructed. If for each $t \in [0, T]$ minimum condition (\ref{5.17}) is satisfied (in practice, with some fixed accuracy $\overline{\varepsilon}$ in sense of $L^2$-norm \linebreak (see problem (\ref{5.19})) or at discrete time moments $t_i$, $i = \overline{1,N}$, with some fixed discretization rank $N$ in sense of $\mathbb R$-norm (see problem (\ref{5.20}))), then $(x_{(k)}, z_{(k)}, u_{(k)})$ is a stationary point of the functional $I(x, z, u)$ and the process terminates. Otherwise, put
$$
(x_{(k+1)}, z_{(k+1)}, u_{(k+1)}) = (x_{(k)}, z_{(k)}, u_{(k)}) + \gamma_{(k)} G(x_{(k)}, z_{(k)}, u_{(k)})
$$
where the vector-function $G(x_{(k)}, z_{(k)}, u_{(k)})$ is the quasidifferential descent direction of the functional $I(x, z, u)$ at the point $(x_{(k)}, z_{(k)}, u_{(k)})$ and the value $\gamma_{(k)}$ is a solution of the following one-dimensional problem
\begin{equation}
\label{5.18}
\min_{\gamma \geqslant 0} I \Big( (x_{(k)}, z_{(k)}, u_{(k)}) + \gamma G(x_{(k)}, z_{(k)}, u_{(k)}) \Big) = I \Big( (x_{(k)}, z_{(k)}, u_{(k)}) + \gamma_{(k)} G(x_{(k)}, z_{(k)}, u_{(k)})\Big). 
\end{equation}
In practice, the problem above is solved on some interval $[0, \overline{\gamma}]$ with some fixed $\overline \gamma$ value. Then $I (x_{(k+1)}, z_{(k+1)}, u_{(k+1)}) \leqslant I (x_{(k)}, z_{(k)}, u_{(k)}) $. 
If the sequence $ (x_{(k)}, z_{(k)}, u_{(k)}) $, \linebreak $k = 1,2, \dots$, is finite then its last point is a stationary point of the functional $I(x, z, u)$ by construction. If the sequence $(x_{(k)}, z_{(k)}, u_{(k)})$, $k = 1,2, \dots$, is infinite, then the described process may not leed to a stationary point of the functional $I(x, z, u)$ since the quasidifferential mapping $(x, z, u) \rightarrow \mathcal{D} I(x, z, u)$ is not continuous \cite{demrub} in Hausdorff metric.

As seen from the algorithm described, in order to realize the $k$-th iteration, one has to solve three subproblems. The first subproblem is to calculate the quasidifferential of the functional $I(x, z, u)$ at the point $(x_{(k)}, z_{(k)}, u_{(k)})$. With the help of quasidifferential calculus rules the solution of this subproblem is obtained in formula (\ref{5.16'}). The second subproblem is to find the quasidifferential descent direction $G(x_{(k)}, z_{(k)}, u_{(k)})$; the following two paragraphs are devoted to solving this subproblem. The third subproblem is one-dimensional minimization (\ref{5.18}); there are many effective methods \cite{Vasil'ev} to solve it. 

In order to obtain the vector-function $G(x_{(k)}, z_{(k)}, u_{(k)})$, consider the problem
\begin{equation}
\label{5.19}
\max_{w \in \overline{\partial} I(x_{(k)}, z_{(k)}, u_{(k)}) } \min_{v \in \underline{\partial} I(x_{(k)}, z_{(k)}, u_{(k)}) } \int_0^T \big( v(t) + w(t) \big )^2 dt. 
\end{equation}
Recall that we identify $v$ and $\upsilon$, $w$ and $\varpi$ for simplicity. Denote $\overline{v}$, $\overline{w}$ a solution of the problem above (below we will see that such a solution exists). (The vector-functions $\overline{v}$, $\overline{w}$, of course, depend on the point $(x_{(k)}, z_{(k)}, u_{(k)})$ but we omit this dependence in the notation for brevity.) Then the vector-function $G(x_{(k)}, z_{(k)}, u_{(k)}) = -\big( \overline{v} + \overline{w} \big)$ is a (not normed) quasidifferential descent direction \cite{demrub} of the functional $I(x, z, u)$ at the point $(x_{(k)}, z_{(k)}, u_{(k)})$. Note that the functional $I(x, z, u)$ quasidifferential at each time moment $t \in [0, T]$ is calculated independently (i. e. its quasidifferential, calculated at one time moment, does not depend on its quasidifferential, calculated at other time moment). 

\begin{remark}
Strictly speaking, in \cite{demrub} only a finite-dimensional space is considered, however, if we take into account the characterization \cite{balakrishnan} of the closest (in norm) element of a closed convex set in a Hilbert space through the nonnegativity of the corresponding scalar product, then the proof therein shows that the steepest descent direction formula remains true for the case considered in the paper.
\end{remark}

%Denote $\underline \partial \varphi(\xi(t), t) = \sum\limits_{i=1}^4 \underline \partial \varphi_i(x(t), z(t), u(t), t)$, $\overline \partial \varphi (t) = \sum\limits_{i=1}^4 \overline \partial \varphi_i(x(t), z(t), u(t), t)$.

Now let us check that in order to solve problem (\ref{5.19}) in this case, one has to solve the problem 
\begin{equation}
\label{5.20}
\max_{w(t) \in \overline \partial \varphi(\xi_{(k)}, t)} \min_{v(t) \in \underline \partial \varphi(\xi_{(k)}, t) } \big( v(t) + w(t) \big )^2
\end{equation}
 for each $t \in [0, T]$. Note that this problem has a solution since we seek for the Hausdorff deviation of one convex compact from another (see below).
 
 Indeed, let $\overline v$, $\overline w \in L_n^\infty[0,T] \times L_n^\infty[0,T] \times L_m^\infty[0,T]$ (these vector-functions exist by Filippov lemma) be such that for each $t \in [0, T]$ we have 
 $$\big( \overline v(t) + \overline w(t) \big )^2 = \max_{w(t) \in \overline{\partial} \varphi(\xi_{(k)}, t) } \min_{v(t) \in \underline{\partial} \varphi(\xi_{(k)}, t) } \big( v(t) + w(t) \big )^2.$$
 Then we obtain
 $$\int_0^T \big( \overline v(t) + \overline w(t) \big )^2 dt = \int_0^T \max_{w(t) \in \overline{\partial} \varphi(\xi_{(k)}, t) } \min_{v(t) \in \underline{\partial} \varphi(\xi_{(k)}, t) } \big( v(t) + w(t) \big )^2 dt = $$
 $$ = \int_0^T \min_{v(t) \in \underline{\partial} \varphi(\xi_{(k)}, t) } \big( v(t) + \overline w(t) \big )^2 dt = \min_{v \in \underline{\partial} I(x_{(k)}, z_{(k)}, u_{(k)}) } \int_0^T \big( v(t) + \overline w(t) \big )^2 dt $$
 where the last equality holds due to Filippov lemma (cf. formula (\ref{5.15})). Hence 
 %the following inequality holds true: 
 $$ \max_{w \in \overline{\partial} I(x_{(k)}, z_{(k)}, u_{(k)}) } \min_{v \in \underline{\partial} I(x_{(k)}, z_{(k)}, u_{(k)}) } \int_0^T \big( v(t) + w(t) \big )^2 dt \geqslant $$
 \begin{equation} \label{ineq1}
  \geqslant \int_0^T \max_{w(t) \in \overline{\partial} \varphi(\xi_{(k)}, t) } \min_{v(t) \in \underline{\partial} \varphi(\xi_{(k)}, t) } \big( v(t) + w(t) \big )^2 dt .
  \end{equation}
  
 Now fix some $\overline{\overline w} \in L_n^\infty[0,T] \times L_n^\infty[0,T] \times L_m^\infty[0,T]$. Again, by Filippov lemma we get
  $$ \min_{v \in \underline{\partial} I(x_{(k)}, z_{(k)}, u_{(k)}) } \int_0^T \big( v(t) + \overline{\overline w}(t) \big )^2 dt = \int_0^T \min_{v(t) \in \underline{\partial} \varphi(\xi_{(k)}, t) } \big( v(t) + \overline{\overline w}(t) \big )^2 dt \leqslant $$ 
  $$ \leqslant \int_0^T \max_{w(t) \in \overline{\partial} \varphi(\xi_{(k)}, t) } \min_{v(t) \in \underline{\partial} \varphi(\xi_{(k)}, t) } \big( v(t) + w(t) \big )^2 dt. $$
  Since the vector-function $\overline{\overline w}(t)$ was chosen arbitrarily, we obtain the inequality
$$ \max_{w \in \overline{\partial} I(x_{(k)}, z_{(k)}, u_{(k)}) } \min_{v \in \underline{\partial} I(x_{(k)}, z_{(k)}, u_{(k)}) } \int_0^T \big( v(t) + w(t) \big )^2 dt \leqslant $$
 \begin{equation} \label{ineq2}
  \leqslant \int_0^T \max_{w(t) \in \overline{\partial} \varphi(\xi_{(k)}, t) } \min_{v(t) \in \underline{\partial} \varphi(\xi_{(k)}, t) } \big( v(t) + w(t) \big )^2 dt. 
  \end{equation}
 %\newpage
 From inequalities (\ref{ineq1}) and (\ref{ineq2}) we finally get the equality
 $$ \max_{w \in \overline{\partial} I(x_{(k)}, z_{(k)}, u_{(k)}) } \min_{v \in \underline{\partial} I(x_{(k)}, z_{(k)}, u_{(k)}) } \int_0^T \big( v(t) + w(t) \big )^2 dt = $$
 \begin{equation} \label{eq}
  = \int_0^T \max_{w(t) \in \overline{\partial} \varphi(\xi_{(k)}, t) } \min_{v(t) \in \underline{\partial} \varphi(\xi_{(k)}, t) } \big( v(t) + w(t) \big )^2 dt. 
  \end{equation}
  
  The equality (\ref{eq}) justifies that in order to solve problem (\ref{5.19}) it is sufficient to solve problem (\ref{5.20}) for each time moment $t \in [0, T]$. Once again we emphasize that this statement holds true due to the special structure of the quasidifferential which in turn takes place due to the ``separation'' implemented of the vector-functions \linebreak $x(t)$ and $\dot x(t)$ into ``independent'' variables.

Problem (\ref{5.20}) at each fixed $t \in [0, T]$ is a finite-dimensional problem of finding the Hausdorff deviation of one convex compact set (a minus superdifferential) from another convex compact set (a subdifferential). This problem may be effectively solved for a rich class of functions; its solution is described in the next paragraph. In practice, one makes a (uniform) partition of the interval $[0, T]$ and this problem is being solved for each point of the partition, i. e. one calculates $G((x_{(k)}, z_{(k)}, u_{(k)}), t_i)$ where \linebreak $t_i \in [0, T]$, $i = \overline{1, N}$, are discretization points (see notation of Lemmas \ref{lm5.5.1}, \ref{lm5.5.2} below). Under additional natural assumption Lemma \ref{lm5.5.1} below guarantees that the vector-function obtained via piecewise-linear interpolation of the quasidifferential descent directions calculated at each point of such a partition of the interval $[0, T]$ converges in the space $L^2_{2n + m}[0, T]$ (as the discretization rank $N$ tends to infinity) to the vector-function $G(x_{(k)}, z_{(k)}, u_{(k)})$ sought. Such an approximation guarantees that the following point $[x_{k+1}, z_{k+1}, u_{k+1}]$ ``does not leave'' the space \linebreak $C_n[0,T] \times P_n[0,T] \times P_m[0,T]$, providing the method correctness in this sense.

As noted in the previous paragraph, during the algorithm realization it is required to find the Hausdorff deviation of the minus superdifferential from the subdifferential of the functional $I (x, z, u)$ at each time moment of a (uniform) partition of the interval $[0, T]$. In this paragraph we describe in detail a solution (for a rich class of functions) of this subproblem for some fixed value $t \in [0, T]$. It is known \cite{demrub} that in many practical cases the subdifferential $\underline \partial \varphi(\xi, t)$ is a convex polyhedron $A(t) \subset \mathbb R^{2n+m}$ and analogously the superdifferential $\overline \partial \varphi(\xi, t)$ is a convex polyhedron $B(t) \subset \mathbb R^{2n + m}$. For example, if some function is a superposition of the finite number of maxima and minima of continuously differentiable functions, then its subdifferential and its superdifferential are convex polyhedra. Herewith, of course, the sets $A(t)$ and $B(t)$ depend on the point $(x, z, u)$. For simplicity, we omit this dependence in this paragraph notation. Find the Hausdorff deviation of the set $-B(t)$ from the set $A(t)$. It is clear that in this case it is sufficient to go over all the vertices $b_j(t)$, $j = \overline{1,s}$ (here $s$ is a number of vertices of the polyhedron $-B(t)$): find the Euclidean distance from every of these vertices to the polyhedron $A(t)$ and then among all the distances obtained choose the largest one. Let the Euclidean distance sought, corresponding to the vertex $b_j(t)$, $j = \overline{1,s}$, is achieved at the point $a_j(t) \in A(t)$ (which is the only one since $A(t)$ is a convex compact). Then the deviation sought is the value $||b_{\overline j}(t) - a_{\overline{j}}(t)||_{\mathbb R^{2n + m}}$, $\overline j \in \{1, \dots, s\}$. (Herewith, this deviation may be achieved at several vertices of the polyhedron $-B(t)$; in this case~$b_{\overline j}(t)$ denotes any of them.)
%and the vector $\displaystyle {g(t) = -\frac{b_{\overline j}(t) - a(t)}{||b(t)_{\overline j} - a(t)||_{R^{2n+m}}}}$ (which, of course, also depends on the point $(x, z, u)$) is (as shown in the previous paragraph) is the quasidifferential descent direction of the functional $I (x(t), z(t), u(t))$ at the point $t \in [0, T]$. 
Note that the arising \linebreak \newline problem of finding the Euclidean distance from a point to a convex polyhedron can be effectively solved by various methods \cite{demmal}.

%In Lemmas \ref{lm5.5.1}, \ref{lm5.5.2} we will write $L^2 [0,T]$ and $L^\infty [0,T]$ instead of $L^2_1 [0,T]$ and $L^\infty_1 [0,T]$ respectively for the convenience of notation.

First, give a lemma with a simple condition which, on the one hand, is quite natural for applications and, on the other hand, guarantees that the function $\mathcal L(t)$ obtained via piecewise-linear interpolation of the sought function $\mathcal G \in L^{\infty}_1[0, T]$ converges to it in the space $L^2_1[0, T]$. 

\begin{lemma} \label{lm5.5.1}
 Let the function $\mathcal G \in L^{\infty}_1[0, T]$ satisfy the following condition: for every $\overline{\delta} > 0$ the function $\mathcal G(t)$ is piecewise continuous on the set $[0, T]$ with the exception of only the finite number of the intervals $\big(\overline t_1(\overline{\delta}), \overline t_2(\overline{\delta})\big), \dots, \big(\overline t_{r}(\overline{\delta}), \overline t_{r+1}(\overline{\delta})\big)$ whose union length does not exceed the number $\overline{\delta}$. 
 
 Choose a (uniform) finite splitting $t_1 = 0, t_2, \dots, t_{N-1}, t_N = T$ of the interval~$[0, T]$ and calculate the values $\mathcal G(t_i)$, $i = \overline{1, N}$, at these points. Let $\mathcal{L}(t)$ be the function obtained with the help of piecewise linear interpolation with the nodes $(t_i, \mathcal G(t_i))$, \linebreak $i = \overline{1, N}$. Then for each $\varepsilon > 0$ there exists such a number $\overline{N}(\varepsilon)$ that for every $N > \overline{N}(\varepsilon)$ one has $||\mathcal L - \mathcal G||^2_{L^2_1[0,T]} \leqslant \varepsilon$.
 \end{lemma}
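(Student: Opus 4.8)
The plan is to estimate $\|L-p\|_{L_2[0,T]}^2$ by splitting the integral over the partition subintervals $[t_i,t_{i+1}]$, $i=\overline{1,N-1}$, and separating those on which $p$ is genuinely continuous from the few that sit near a discontinuity of $p$ or inside the exceptional ``bad'' intervals supplied by the hypothesis. Write $h=T/(N-1)$ for the mesh size and let $M$ be such that $|p(t)|\le M$ for all $t\in[0,T]$ (finite since $p\in L_\infty[0,T]$); since $L$ is a piecewise linear interpolant of the values $p(t_i)$, it inherits the same bound $|L(t)|\le M$.

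First I would fix the tolerance. Given $\varepsilon>0$, apply the hypothesis with a value $\overline{\delta}>0$ to be pinned down later, obtaining finitely many open intervals of total length $\le\overline{\delta}$ outside of which $p$ is piecewise continuous. Denote their union by $B$ and set $G=[0,T]\setminus B$; on $G$ the function $p$ has only finitely many discontinuities $\tau_1,\dots,\tau_q$ and is uniformly continuous on each of the finitely many closed pieces between consecutive discontinuities (and between these and the endpoints of the intervals composing $G$). Taking the largest of the corresponding moduli of continuity yields a single modulus $\omega$ with $\omega(h)\to 0$ as $h\downarrow 0$.

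Next I classify the subintervals. On a subinterval $[t_i,t_{i+1}]$ lying entirely inside one continuity piece of $p$, writing $L(t)=(1-\lambda)p(t_i)+\lambda p(t_{i+1})$ with $\lambda=(t-t_i)/(t_{i+1}-t_i)\in[0,1]$, the elementary estimate
$$|L(t)-p(t)|\le (1-\lambda)\,|p(t_i)-p(t)| + \lambda\,|p(t_{i+1})-p(t)| \le \omega(h)$$
holds, so these ``good'' subintervals together contribute at most $\omega(h)^2\,T$ to $\|L-p\|_{L_2[0,T]}^2$. On every remaining subinterval I use only the crude bound $|L-p|\le 2M$. The remaining subintervals are of two kinds: those whose closure meets some $\tau_j$ (at most $2q$ of them, total length $\le 2qh$) and those meeting $B$ (total length at most $\mathrm{meas}(B)+2rh\le\overline{\delta}+2rh$, since each of the $r$ exceptional intervals can enlarge the covering by at most one mesh step on each side). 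Altogether
$$\|L-p\|_{L_2[0,T]}^2 \le \omega(h)^2\,T + 4M^2\big(\overline{\delta}+2rh+2qh\big).$$

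The decisive point — and the main thing to get right — is the order of the quantifiers. I first choose $\overline{\delta}=\varepsilon/(8M^2)$, which makes the term $4M^2\overline{\delta}$ equal to $\varepsilon/2$; crucially, this choice simultaneously fixes the finite numbers $r$ and $q$. Only afterwards do I let $N\to\infty$: since $h=T/(N-1)\to 0$ and $\omega(h)\to 0$, the remaining terms $\omega(h)^2 T + 8M^2(r+q)h$ can be made $\le \varepsilon/2$ for all $N$ larger than some $\overline{N}(\varepsilon)$, giving $\|L-p\|_{L_2[0,T]}^2\le\varepsilon$ as required. The only genuine obstacle is the bookkeeping that keeps $q$ and $r$ fixed once $\overline{\delta}$ is chosen, so that their $O(h)$ contributions vanish in the limit; extracting a single uniform modulus of continuity $\omega$ over the finitely many continuity pieces is the other technical ingredient, but it follows from ordinary uniform continuity on each closed piece.
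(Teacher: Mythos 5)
Your proof is correct and follows essentially the same route as the paper's: the same decomposition of $\|L-p\|^2_{L_2[0,T]}$ into the integral over the exceptional set of measure $\leqslant\overline{\delta}$ (controlled by the uniform bound $4M^2$ and the choice $\overline{\delta}\sim\varepsilon/M^2$, made \emph{before} letting $N\to\infty$) plus the integral over its complement, with the same quantifier order. The only difference is that where the paper simply cites \cite{ryab} for the $L_2$-convergence of the piecewise-linear interpolant of a bounded piecewise-continuous function, you prove that step directly via a uniform modulus of continuity on the finitely many continuity pieces together with the crude $2M$ bound on the $O(h)$-length subintervals meeting the discontinuities or the exceptional set.
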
  

 \begin{proof} Denote $M(\overline{\delta}) := \bigcup\limits_{k=1}^{r} \big(\overline t_{k}(\overline{\delta}), \overline t_{k+1}(\overline{\delta})\big)$. We have 
  $$||\mathcal L - \mathcal G||^2_{L^2_1[0,T]} = \int_{M(\overline{\delta})} \big (\mathcal L(t) - \mathcal G(t) \big)^2 dt + \int_{[0, T] \setminus M(\overline{\delta})} \big (\mathcal L(t) - \mathcal G(t) \big)^2 dt.$$
   Fix the arbitrary number $\varepsilon > 0$. By lemma condition the function $\mathcal G(t)$ is bounded, the function $\mathcal L(t)$ is also bounded by construction for all (uniform) finite partitions of the interval $[0, T]$. Hence, there exists such $\overline\delta(\varepsilon)$ that the first summand does not exceed the value ${\varepsilon}/{2}$ for all (uniform) finite partitions
of the interval $[0, T]$. As assumed, the function $\mathcal G(t)$ is piecewise continuous and bounded on the set $[0, T] \setminus M(\overline{\delta}(\varepsilon))$, then there exists \cite{ryab} such a number $\overline{N}(\varepsilon)$ that for every (uniform) finite partition of the interval $[0, T]$ of the rank $N > \overline{N}(\varepsilon)$ the second summand (with such~$\overline{\delta}(\varepsilon))$ does not exceed the value ${\varepsilon}/{2}$. 

%The lemma is proved.    
\end{proof} 
 
   Now give a lemma with a more general but less clear (compared to Lemma~\ref{lm5.5.1}) condition also guaranteeing that the function $\mathcal L(t)$ obtained via piecewise-linear interpolation of the sought function $\mathcal G \in L^{\infty}_1[0, T]$ converges to it in the space $L^2_1[0, T]$.
   
\begin{lemma} \label{lm5.5.2}
 Let the function $\mathcal G \in L^{\infty}[0, T]$ satisfy the following conditions: 
 \newline 1) for every $\overline{\varepsilon} > 0$ there exists such a closed set $T(\overline{\varepsilon}) \subset [0, T]$ that the function $\mathcal G(t) |_{T(\overline{\varepsilon})}$ is continuous and $|T'(\overline{\varepsilon})| < \overline{\varepsilon}$ where $T'(\overline{\varepsilon}) := [0, T] \setminus T(\overline{\varepsilon})$; 
 \newline
 2) for every $\delta > 0$ there exists such a number $\overline{\overline{N}}(\delta)$ that for each $N > \overline{\overline{N}}(\delta)$ we have $|M(\delta)| < \delta$ where $M(\delta) := \bigcup\limits_{k=2}^{N} [t_{k-1}, t_{k}]$; here we take the union of only such intervals $[t_{k-1}, t_{k}]$, in each of which at least one of the points $t_{k-1}$, $t_{k}$, $k \in \{2, \dots, N\}$, belongs to the set $T'(\overline{\varepsilon})$.
 
  Choose a (uniform) finite splitting $t_1 = 0, t_2, \dots, t_{N-1}, t_N = T$ of the interval~ $[0, T]$ and calculate the values $\mathcal G(t_i)$, $i = \overline{1, N}$, at these points. Let $\mathcal L(t)$ be the function obtained with the help of piecewise linear interpolation with the nodes $(t_i, \mathcal G(t_i))$, \linebreak $i = \overline{1, N}$. Then for each $\varepsilon > 0$ there exists such a number $\overline{N}(\varepsilon)$ that for every $N > \overline{N}(\varepsilon)$ one has $||\mathcal L - \mathcal G||^2_{L^2_1[0,T]} < \varepsilon$. 
  \end{lemma}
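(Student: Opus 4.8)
The plan is to mimic the proof of Lemma 6.1, splitting $[0,T]$ into a part of small measure (on which boundedness alone controls the interpolation error) and a part on which piecewise-linear interpolation is genuinely accurate; the difference is that here the ``small measure'' part must be assembled from two sources. First I would fix $\varepsilon > 0$ and record the bound $|p| \le C$ on $[0,T]$ with $C := ||p||_{L_\infty[0,T]}$, noting as in Lemma 6.1 that the interpolant $L$ is bounded by the same constant $C$ by construction, so that $(L(t)-p(t))^2 \le 4C^2$ everywhere. I would then invoke condition 1) to fix a closed set $T(\overline{\varepsilon})$ on which $p$ is continuous with $|T'(\overline{\varepsilon})| < \overline{\varepsilon}$, the value $\overline{\varepsilon}$ to be pinned down at the end; since $T(\overline{\varepsilon})$ is closed and bounded, hence compact, the restriction $p|_{T(\overline{\varepsilon})}$ is in fact uniformly continuous.

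Next I would decompose the integral into three pieces,
$$||L-p||_{L_2[0,T]}^2 = \int_{M(\delta)} (L-p)^2\,dt + \int_{([0,T]\setminus M(\delta)) \cap T'(\overline{\varepsilon})} (L-p)^2\,dt + \int_{([0,T]\setminus M(\delta)) \cap T(\overline{\varepsilon})} (L-p)^2\,dt,$$
where $M(\delta)$ is the union of those partition subintervals meeting $T'(\overline{\varepsilon})$. The first integral is bounded by $4C^2 |M(\delta)|$, which by condition 2) is at most $4C^2\delta$ once $N > \overline{\overline{N}}(\delta)$; the second integral is bounded by $4C^2 |T'(\overline{\varepsilon})| \le 4C^2 \overline{\varepsilon}$, since its domain is contained in $T'(\overline{\varepsilon})$. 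Hence, by choosing $\delta$ and $\overline{\varepsilon}$ small enough, the first two terms together can be made smaller than $2\varepsilon/3$.

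The crux is the third integral. Here every relevant subinterval $[t_{k-1},t_k]$ lies in $[0,T]\setminus M(\delta)$, so by definition of $M(\delta)$ both of its endpoints belong to $T(\overline{\varepsilon})$. For a point $t \in [t_{k-1},t_k] \cap T(\overline{\varepsilon})$ the value $L(t)$ lies between $p(t_{k-1})$ and $p(t_k)$, so
$$|L(t)-p(t)| \le |p(t_k)-p(t_{k-1})| + |p(t_{k-1})-p(t)|;$$
since $t$, $t_{k-1}$, $t_k$ all lie in $T(\overline{\varepsilon})$ and are within one mesh width $T/(N-1)$ of one another, uniform continuity of $p|_{T(\overline{\varepsilon})}$ makes the right-hand side uniformly small, below a prescribed $\eta$, once $N$ is large. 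Thus the third integral is at most $\eta^2 T$, which is below $\varepsilon/3$ for $\eta$ small, and taking $\overline{N}(\varepsilon)$ to be the largest of the thresholds from condition 2) and from uniform continuity finishes the estimate.

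I expect the main obstacle to be exactly this third step: unlike in Lemma 6.1, a subinterval avoiding $T'(\overline{\varepsilon})$ at its \emph{endpoints} may still contain interior points of $T'(\overline{\varepsilon})$ on which $p$ is uncontrolled, so the naive ``interpolation is accurate on good intervals'' estimate fails pointwise. The resolution, and the reason both hypotheses are needed at once, is the extra intersection with $T(\overline{\varepsilon})$: the bad interior points are absorbed into the second integral through the measure bound $|T'(\overline{\varepsilon})| < \overline{\varepsilon}$, while on $T(\overline{\varepsilon})$ the estimate is rescued by uniform continuity of $p|_{T(\overline{\varepsilon})}$. Care must also be taken with the order of the choices: $\overline{\varepsilon}$, and hence $T(\overline{\varepsilon})$ and $T'(\overline{\varepsilon})$, must be fixed \emph{before} condition 2) is applied, since $M(\delta)$ is defined relative to $T'(\overline{\varepsilon})$.
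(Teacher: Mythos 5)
Your proof is correct, but it follows a genuinely different route from the paper's. The paper fixes the Lusin set $T(\overline{\varepsilon})$ and then invokes an external construction: a continuous ``polygonal extension'' $q(t)$ of $p(t)|_{T(\overline{\varepsilon})}$ to all of $[0,T]$ (Cullum's Lemma 4.1), chosen so that $\int_0^T (q-p)^2\,dt < \varepsilon/3$; it then introduces the auxiliary interpolant $\overline{L}$ of $q$, uses the standard fact that piecewise-linear interpolation of a \emph{continuous} function converges in $L_2$, and observes that $L = \overline{L}$ off $M(\delta)$ because the partition nodes there lie in $T(\overline{\varepsilon})$, where $q$ and $p$ agree. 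You instead avoid any extension: you split the domain three ways ($M(\delta)$, the remaining part of $T'(\overline{\varepsilon})$, and the remaining part of $T(\overline{\varepsilon})$), kill the first two pieces by the crude bound $4C^2$ times a small measure, and handle the third by the elementary observation that on a subinterval avoiding $M(\delta)$ both endpoints lie in $T(\overline{\varepsilon})$, so $L(t)$ is sandwiched between two values of $p$ at nearby points of $T(\overline{\varepsilon})$ and uniform continuity of $p|_{T(\overline{\varepsilon})}$ (compactness of the closed bounded Lusin set) gives a pointwise bound. Your identification of the key subtlety --- that a ``good'' subinterval may still contain interior points of $T'(\overline{\varepsilon})$, which must be absorbed by the measure bound rather than by continuity --- is exactly the right reason the argument needs both hypotheses, and your explicit remark that $\overline{\varepsilon}$ must be fixed before condition 2) is applied is also faithful to the logic. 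What the two approaches buy: the paper's proof is shorter on the page but leans on two cited results (the extension lemma and the interpolation-convergence theorem) and on the structure of Lemma 6.1, whereas yours is self-contained, needs no $L_2$-closeness of an extension to $p$, and as a by-product avoids the paper's slightly careless final inequality $\|L-p\|^2 \le \|L-q\|^2 + \|q-p\|^2$, which strictly speaking requires an extra factor of $2$ (harmless, but absent in your version since you never compare through an intermediate function).
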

  
\begin{proof} Note that the first assumption of the lemma is always satisfied since it is nothing but formulation of Lusin theorem \cite{kolfom}. However, it is given in the lemma formulation since the set $T'(\varepsilon)$ introduced there is used in the second assumption of the lemma.
  
  Fix some number $\varepsilon > 0$. 
  
  Let $\mathcal P(t)$ be a ``polygonal extension'' of the function $\mathcal G(t) |_{T(\overline{\varepsilon})}$ onto the whole interval $[0, T]$ which may be constructed \cite{Cullum1969} due to the fact that the set $T(\overline{\varepsilon})$ is closed (see assumption 1) of the lemma). Then the function $\mathcal P(t)$ is continuous on $[0, T]$ and $\mathcal P(t) = \mathcal G(t)$ at $t \in T(\overline{\varepsilon})$. Herewith, one can check \cite{Cullum1969} that one may choose $\overline{\varepsilon}$ in such a way that  
  \begin{equation}
\label{5.22}
\displaystyle{\int_0^T (\mathcal P(t) - \mathcal G(t))^2 dt < \varepsilon/3}.
\end{equation} 
  
  Consider the expression \begin{equation}
\label{5.21} \displaystyle{\int_0^T \big (\mathcal L(t) - \mathcal P(t) \big)^2 dt} = \int_{M(\delta)} \big (\mathcal L(t) - \mathcal P(t) \big)^2 dt + \int_{[0, T] \setminus M(\delta)} \big (\mathcal L(t) - \mathcal P(t) \big)^2 dt. \end{equation}  
 
 Consider the first summand in the right-hand side of equality (\ref{5.21}). By construction the function $\mathcal P(t)$ is bounded, the function $\mathcal L(t)$ is also bounded by construction for all (uniform) finite partitions of the interval $[0, T]$. Then from assumption 2) of the lemma it follows that for $\varepsilon > 0$ there exists such $\delta(\varepsilon)$ that for every (uniform) partition of the interval $[0, T]$ of the rank $N > \overline{\overline{N}}(\delta(\varepsilon))$ one has
   \begin{equation}
\label{5.23} \int_{M(\delta)} \big (\mathcal L(t) - \mathcal P(t) \big)^2 dt < \varepsilon/3.
\end{equation} 

Consider the second summand in the right-hand side of equality (\ref{5.21}). Let $\overline {\mathcal L}(t)$ be a function obtained via piecewise-linear interpolation with the nodes $(t_i, \mathcal P(t_i))$, \linebreak $i = \overline{1, N}$. Insofar as the function $\mathcal P(t)$ is continuous on  $[0, T]$, then there exists \cite{ryab} such a number $\overline{\overline{\overline{N}}}(\varepsilon)$ that for every (uniform) partition of the interval $[0, T]$ of the rank $N > \overline{\overline{\overline{N}}}(\varepsilon)$ one has $\displaystyle{\int_0^T \big (\mathcal {\overline L}(t) - \mathcal P(t) \big)^2 dt < \varepsilon / 3}$. But at $t \in [0, T] \setminus M(\delta)$ we have $\mathcal L(t) = \mathcal {\overline L}(t)$ by construction (with the same rank of partitions involved in these functions construction), insofar as if $t_i \in [0, T] \setminus M(\delta)$, $i \in \{1, \dots, N\}$, then $t_i \in T(\varepsilon)$, and for such $t_i$ we have $\mathcal P(t_i) = \mathcal G(t_i)$. For every (uniform) partition of the interval $[0, T]$ of the rank $N > \overline{\overline{\overline{N}}}(\varepsilon)$ we then have
  \begin{equation}
\label{5.24} \int_{[0, T] \setminus M(\delta)} \big (\mathcal L(t) - \mathcal P(t) \big)^2 dt = \int_{[0, T] \setminus M(\delta)} \big (\overline {\mathcal L}(t) - \mathcal P(t) \big)^2 dt \leqslant \int_0^T \big (\overline {\mathcal L}(t) - \mathcal P(t) \big)^2 dt < \varepsilon / 3. 
\end{equation} 
  
  Take $\overline{N}(\varepsilon) = \max\Big\{\overline{\overline{N}}(\delta(\varepsilon)), \overline{\overline{\overline{N}}}(\varepsilon)\Big\}$. For every (uniform) partition of the interval $[0, T]$ of the rank $N > \overline{N}(\varepsilon)$ from  (\ref{5.22}), (\ref{5.23}), (\ref{5.24}) we finally have
    $$||\mathcal L - \mathcal G||^2_{L^2_1[0,T]} \leqslant \int_{0}^T \big (\mathcal L(t) - \mathcal P(t) \big)^2 dt + \int_{0}^T \big (\mathcal P(t) - \mathcal G(t) \big)^2 dt < \varepsilon / 3 + \varepsilon / 3 + \varepsilon / 3 = \varepsilon.$$
    %The lemma is proved.
\end{proof} 

 \begin{remark} 
 The meaning of assumption~2) of Lemma \ref{lm5.5.2} is the requirement that $\mathcal G(t)$ does not have ``too many'' discontinuity points on the segment $[0, T]$. It may be directly verified that if the condition of Lemma \ref{lm5.5.1} on the function $\mathcal G(t)$ is fulfilled, then the condition required is satisfied. In the picture a simple example is given of a measurable bounded function with an infinite number of discontinuity points for which one may construct the function $\mathcal P(t)$ in such a way that the set $M(\delta)$ measure is arbitrarily small for a sufficiently large splitting rank. It is an example of an ``appropriate'' in the sense of Lemma \ref{lm5.5.2} assumption function. 
 \end{remark}
   % give an example of those ones for which this condition is satisfied.
    %\caption{Birds}
    \begin{wrapfigure}{r}{0.5\textwidth} \begin{center}
     \includegraphics[width=0.48\textwidth]{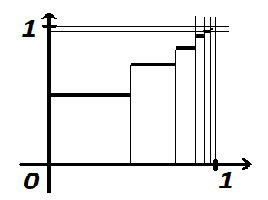}
  \end{center}    
      \end{wrapfigure}    
Let us give an example of the function for which this condition is violated. Let $\mathcal G(t)$ be the Dirichlet function on the segment $[0, 1]$, i. e. taking the value $1$ at rational points and taking the value $0$ at irrational points of this interval. If we take the function $\mathcal P(t) = 0$ $\forall t \in [0, 1]$ as a continuous one and as satisfying Lusin theorem applied to the function $\mathcal G(t)$, then condition~2) of Lemma \ref{lm5.5.2} will be violated with every rank of (uniform) splitting of the interval $[0, 1]$, insofar as with such a splitting all the splitting points will be rational, i. e. will belong to the set $T'(\overline\varepsilon)$ $\forall \overline \varepsilon > 0$, hence $|M(\delta)| = 1$ $\forall \delta > 0$ in this case. It is seen that in this example one has $||\mathcal L - \mathcal G||^2_{L_1^2[0,1]} =1$ for each function $\mathcal L(t)$ obtained via piecewise-linear interpolation of the function $\mathcal G(t)$ with a uniform splitting of the segment $[0, 1]$, insofar as with such a splitting we always have $\mathcal L(t) = 1$ $\forall t \in [0, 1]$. The Dirichlet function $\mathcal G(t)$ does not satisfy condition 2) of Lemma \ref{lm5.5.2}, insofar as this function has ``too many'' discontinuity points on the interval $[0, 1]$.  
\begin{remark}
Let us briefly note here the theoretical and practical significance of the constructed algorithm. Theoretically, the algorithm is interesting in that, in contrast to the currently known optimality conditions, the minimum conditions are obtained here in pointwise form. In practice, the algorithm is interesting in that the search for descent directions at individual sampling moments can be carried out independently of each other. These advantages are described in more detail in Section 8 (see also Remark \ref{rm_100}).
\end{remark}
%Кратко отметим здесь теоретическое и практическое значения построенного алгоритма. Теоретически алгоритм интересен тем, что в отличие известных на сегодняший день условий оптимальности, условия минимума получены здесь в поточечной форме. Практически алгоритм интересен тем, что поиск направлений спуска в отдельные моменты дискретизации можно осуществлять независимо друг от друга. Подробнее эти преимущества описаны в разделе Discussion (см. Замечание \ref{rm_100}). 

\begin{remark} \label{rm5.5.2}
The main concepts described above can be applied to some more difficult control problems. For example, consider the problem of minimizing the functional $$J(x, z, u) = \int^T_0 f_0(x(t), \dot x(t), u(t), t) dt$$
under restrictions (\ref{5.1}), (\ref{5.2}), (\ref{5.5}) and $u \in U$ (see (\ref{5.4''})). 
The vector-function $f_0(x, \dot x,u,t)$ is supposed to be continuous on its domain and to be quasidifferentiable in $(x, z, u)$ and locally Lipschitz continuous at each point $(x, z, u)$ at each fixed $t \in [0, T]$.
 
Construct the functional
$$J(x, z, u) + \lambda I(x,z, u) $$
with the sufficiently large number $\lambda$. In practice it may be also more reasonably to take different penalty factors at the summands $I_i$, $i = \overline{1,4}$, of the functional $I(x, z, u)$ (see formula~(\ref{5.7})) corresponding to different restrictions. The aim of these penalty parameters choice is to satisfy the corresponding restrictions with an error required. Note that the functional constructed may be treated via the method proposed. 
%See Example \ref{ex_4.4} below.
\end{remark}

\section{Numerical Examples} 
Let us first explain the operation of the quasidifferential descent method using an illustrative example in which the iterations are given in detail. For simplicity of presentation this example is not chosen as a control problem but is a nonsmooth problem of the calculus of variations; however, according to the minimized functional structure, it fits the formulation of the problem considered.

\begin{example}
Consider the functional 
$$I(x,z) = \int_0^1 \Big|z(t) + |x(t)|\Big| dt + \int_0^1 |z(t)| dt + \int_0^T \Big( x(t) - x_0 - \int_0^t z(\tau) d \tau \Big)^2 dt  $$
with the initial point $x_0  = 0$ and with the obvious solution $x^*(t) = 0$, $z^*(t) = 0$ for all $t \in [0, 1]$. Note that the third summand here means that $z(t)$ must be the derivative of $x(t)$ (see the functional $I_4(x, z)$ in formula (\ref{5.7}) and formula (\ref{5.6})). 

Take the functions $x_{(1)}(t) = t-0.5$, $z_{(1)}(t) = 0$ as an initial approximation and discretize the segment $[0, 1]$ with a splitting rank, equal to $2$ (i. e. consider the points $0$, $0.5$, $1$ for further interpolation of the quasidifferential descent direction).  In this example let us use the following simplified notation $\left[ \underline{\partial} I(x_{(1)}, z_{(1)}), \overline{\partial} I(x_{(1)}, z_{(1)}) \right]$ in order to denote the corresponding quasidifferential (see formulas (\ref{5.12}) and (\ref{5.13})). \linebreak According to the algorithm, calculate the descent direction separately at these points. At the point $t=0$ we have $$\underline{\partial} I(x_{(1)}, z_{(1)}) = \mathrm{co} \left\{ \left( \begin{array}{c} 0 \\ -2 \end{array} \right), \left(\begin{array}{c} 2 \\ -2 \end{array} \right) \right\},$$  $$\overline{\partial} I(x_{(1)}, z_{(1)}) = \left( \begin{array}{c} 0 \\ 0 \end{array} \right).$$ Find the deviation of the set $-\overline{\partial}  I(x_{(1)}, z_{(1)})$ from the set $\underline{\partial} I(x_{(1)}, z_{(1)})$ at the point $t=0$ and obtain the quasidifferential descent direction $G(( x_{(1)},z_{(1)}), 0) = (0, 2)'$. At the point $t=0.5$ we have \bigskip \bigskip \bigskip \bigskip \bigskip $$
\underline{\partial} I(x_{(1)}, z_{(1)}) = \mathrm{co} \left\{ \left( \begin{array}{c} -0.25 \\ -2 \end{array} \right), \left(\begin{array}{c} 1.75 \\ -2 \end{array} \right),  \left( \begin{array}{c} -0.25 \\ 2 \end{array} \right), \left( \begin{array}{c} 1.75 \\ 2 \end{array} \right), \left( \begin{array}{c} -2.25 \\ 0 \end{array} \right), \left( \begin{array}{c} -0.25 \\ 0 \end{array} \right) \right\}, $$
$$\overline{\partial} I(x_{(1)}, z_{(1)}) = \mathrm{co} \left\{ \left( \begin{array}{c} 0 \\ -1 \end{array} \right), \left(\begin{array}{c} 0 \\ 1 \end{array} \right) \right\},$$ hence, one has $G((x_{(1)},z_{(1)}), 0.5) = (0, 0)'$. At the point $t=1$ we have $$\underline{\partial} I(x_{(1)}, z_{(1)}) = \mathrm{co} \left\{ \left( \begin{array}{c} 0 \\ 2 \end{array} \right), \left(\begin{array}{c} 2 \\ 2 \end{array} \right) \right\},$$  $$\overline{\partial} I(x_{(1)}, z_{(1)}) = \left( \begin{array}{c} 0 \\ 0 \end{array} \right),$$ so one has $G((x_{(1)},z_{(1)}), 1) = (0, -2)'$. By making the appropriate interpolation we obtain the quasidifferential descent direction of the functional $I(x,z)$ at the point $(x_{(1)}, z_{(1)})$, namely $G(x_{(1)}, z_{(1)}) = (0, -4t+2)'$. Construct the next point $(x_{(2)}(t), z_{(2)}(t)) =$ \linebreak $= (0 + \gamma_{(1)} \, 0, t-0.5 + \gamma_{(1)} (-4t+2))'$; having solved the one-dimensional minimization problem $\min_{\gamma \geqslant 0} I(x_{(2)}, z_{(2)})$, we have $\gamma_{(1)} = 0.25$, hence, $x_{(2)}(t) = 0 $, $z_{(2)}(t) = 0 $ for all $t \in [0, 1]$, i. e. in this case the quasidifferential descent method leads to the exact solution in one step. 

Of course, the initial approximation and discretization rank are chosen artificially here in order to demonstrate the essence of the method developed. If we take different initial approximation and discretization rank, then in a general case it will no longer be possible to obtain an exact solution in the finite number of steps. 
\end{example}

%Such a choice of accuracy is due to a compromise between the permissible for practice accuracy of the required value of the considered functional and a not very large calculations time capacity. 

%The magnitude $||G{(x, z, u)}||$ (the norm is considered in the corresponding space) on the solution is also presented and did not exceed the value $5 \times 10^{-3}$ --- $5 \times 10^{-2}$  (see inclusion (\ref{17}) and problem (\ref{19})). 

%The calculations were performed in the package MatLab 18.0 Serial Number: ... on a computer with the ... CPU @ ... GHz and ... GB of RAM. 

%The calculations were performed in the package MatLab 18.0 on a computer with the 3.6 GHz AMD Ryzen 5 PRO 2400G CPU and 8 GB of RAM.
%\newpage
Consider now some examples of bringing a nonsmooth system from one point to another. In these problems the quasidifferential descent method led to an (approximate) minimum point of functional (\ref{5.7}). 

The calculations were performed in the package MatLab 18.0
on a computer with the 3.6 GHz AMD Ryzen 5 PRO 2400G CPU
and 8 GB of RAM. The solution of the one-dimensional minimization problem was carried out on the interval $[0, 1]$ (i. e. here \linebreak $\overline \gamma = 1$) in MatLab via incorporated algoruthm fminbnd() with its inner default parameters. In accordance with the documentation, in order to solve this problem in MatLab the golden mean method is used combined with parabolic interpolation \cite{Vasil'ev}. All the integrals were calculated in MatLab via the incorporated function evalf(int()) with its inner default parameters. In accordance with the documentation, in order to solve this problem in MatLab the Gauss-Kronrod method \cite{BerezinZhidkov} is used. In examples the parameter $\overline\varepsilon$ was ignored and the solution error was evaluated based on the value of the functional. In this case, the value at the right endpoint was calculated for the trajectories resulting from numerical integration via Euler forward scheme with the step $10^{-4}$ of the system (from the initial left endpoint given) with the control substituted which was obtained via the method. In order to solve the auxiliary problem of finding the Euclidean distance from a point to a convex polyhedron the first method of subsequent approximations of Malozemov-Demyanov-Mitchell algorithm was used with its parameter $\overline{\delta} = 10^{-4}$.

%Использовался параметр $\mathit {deg} = 5$ для аппроксимации искомых вектор функций на каждой $5$-й итерации.

% В данном случае параметр $\overline\varepsilon$ игнорировался и оценка погрешности решения проводилась исходя из значения функционала. Из данной таблицы видно, что на $7$-й итерации ошибка по функционалу не превышает величины $5 \times 10^{-3}$.

%В данном случае параметр $\overline\varepsilon = 0.1$.

\begin{example}\label{ex_4.2}
Consider the system
$$\dot x_1(t) = -|x_1(t)|,$$
$$\dot x_2(t) = u(t).$$

It is required to find such a control $u^* \in U$ which brings this system from the initial point $x(0) = (0, 0)'$ to the final state $x(1) = (0, 0)'$ at the moment $T=1$. Herewith, put $\underline u = -1$, $\overline u = 1$, i.~e. we suppose that $-1 \leqslant u(t) \leqslant 1$ $\forall t \in [0, 1]$. Note that the boundary conditions are intentionally taken the same in order to have the solution $x_1(t) = 0$ for all $t \in [0, 1]$, so the function~$|x_1|$ is essentially nonsmooth at this point.   

The problem given is reduced to an unconstrained minimization of the functional 
$$
I(x,z,u) = \int_0^1 \Big| z_1 (t) + |x_1(t)| \Big | dt + \int_0^1 \Big| z_2 (t) - u(t) \Big | dt + $$ $$ +\frac{1}{2} \Big( \int_0^1 z_1(t) dt\Big)^2 + \frac{1}{2} \Big( \int_0^1 z_2(t) dt\Big)^2 + $$
$$ +  \int_0^1 \max \Big\{-1 - u(t), 0 \Big\} dt + \int_0^1 \max \Big\{{u}(t) - 1, 0 \Big\} dt + $$ 
$$ + \frac{1}{2} \int_0^1 \Big( x_1(t) - \int_0^t z_1(\tau) d \tau \Big)^2 dt + \frac{1}{2} \int_0^1 \Big( x_2(t) - \int_0^t z_2(\tau) d \tau \Big)^2 dt. $$

Take $(x_{(1)}, z_{(1)}, u_{(1)}) = (1, 1, 1, 1, 1)'$ as an initial point, then $I(x_{(1)}, z_{(1)}, u_{(1)})\approx \linebreak \approx 3.33333$. As the iteration number increased, the discretization rank gradually increased during the solution
of the auxiliary problem of finding the direction of the quasidifferential descent described in the algorithm and, in the end, the discretization step was equal to $10^{-1}$ (i. e. $N = 10$). At the $17$-th iteration the control $u_{(17)}$ was constructed:
%At the $17$-th iteration the point  $(x_{(17)}, z_{(17)}, u_{(17)})$ was constructed with the following coordinates (we give only the trajectories and control for brevity):
%$$x_1 \approx -0.63882 t^4+1.29108 t^3-0.80314 t^2+0.15445 t,$$
%$$x_2 \approx 0.56249 t+0.26227 t^3-0.81069 t^2+0.0056 t^5-0.01429 t^4,$$
%$$z_1 \approx 0.46562 t^2-0.10874 t-0.00066+0.26008 t^4-0.62899 t^3,$$
%$$z_2 \approx 0.56249+0.7868 t^2-1.62139 t+0.02824 t^4-0.05719 t^3,$$ 
\small
$$u(t) \approx 0.56234-1.61222 t-0.04788 t^3+0.78478 t^2,$$ 
\normalsize with the value of the functional $I(x_{(17)}, z_{(17)}, u_{(17)}) \approx 0.00462$, herewith \linebreak $x_1(T) \approx 0.00357$, $x_2(T) \approx 0.00538$. For the convenience, the Lagrange interpolation polynomial is given accurately approximating (that is, the interpolation error does not affect the value of the functional and the boundary values)
%and the Hausdorff deviation of the minus superdifferential from the subdifferential presented with a given accuracy) 
%the resulting trajectory.
the resulting control.

Take $u_{(17)}$ as an approximation to the control $u^*$ sought. In order to verify the result obtained and to find the ``true'' trajectory, we substitute this control into the system given and integrate it numerically. As a result, we have the corresponding trajectory (which is an approximation to the one $x^*$ sought) with the values $x_1(T) = 0$, $x_2(T) = 0.00585$, so we see that the error on the right endpoint does not exceed the value $5 \times 10^{-3}$; the control restrictions are satisfied exactly. 
%Herewith, $||G(x, z, u)||_{L^2_2[0,T] \times L^2_2[0,T] \times L_1^2[0,T]} =  0.00585$.

The computational time was 0 min 36 sec. The pictures illustrate the control and trajectories dynamics during the algorithm realization. 

 \begin{figure*}[h!]
\begin{minipage}[h]{0.3\linewidth}
\center{\includegraphics[width=1\linewidth]{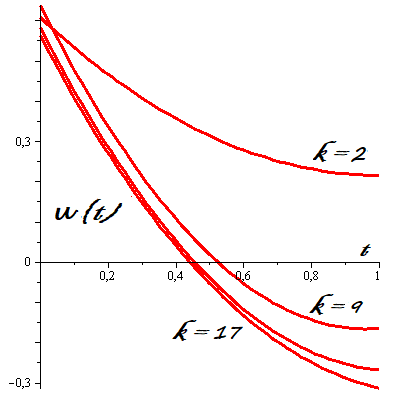} }
\end{minipage}
\hfill
\begin{minipage}[h]{0.3\linewidth}
\center{\includegraphics[width=1\linewidth]{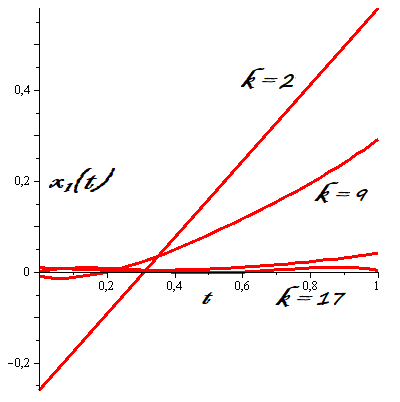} }
\end{minipage}
\hfill
\begin{minipage}[h]{0.3\linewidth}
\center{\includegraphics[width=1\linewidth]{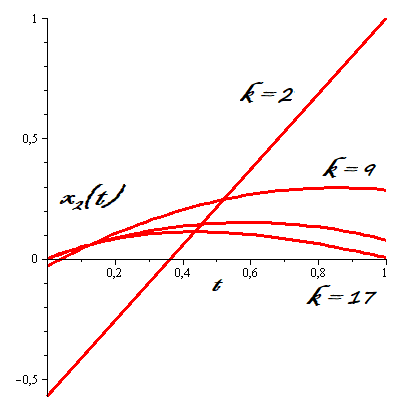} }
\end{minipage}
\caption{Example \ref{ex_4.2}, control and trajectories on iterations: 2, 9, 13, 17}
%\label{ris:image2}
\end{figure*}

\end{example}

\begin{example} \label{ex_4.3}
It is required to bring the system
$$\dot x_1(t) = x_2(t) x_3(t) + u_1(t),$$
$$\dot x_2(t) = x_1(t) x_3(t) + u_2(t),$$
$$\dot x_3(t) = x_1(t) x_2(t) + u_3(t)$$
%\newpage
 from the initial point $x(0) = (1, 0, 0)'$ to the final state $x(1) = (0, 0, 0)'$ at the moment $T = 1$. Herewith, we suppose that the total control consumption is subject to the constraint
 $ \displaystyle{\int_0^1 |u_1(t)| + |u_2(t)| + |u_3(t)| dt = 1}.$
 This problem has a practical application to the optimal satellite stabilization and was considered in work \cite{krylov}. With the help of the new variable \linebreak $x_4(t) = \displaystyle{\int_0^t |u_1(\tau)| + |u_2(\tau)| + |u_3(\tau)| d \tau}$ reduce the problem given to problem \linebreak (\ref{5.1}), (\ref{5.2}), (\ref{5.5}) which is considered in the paper.
 
 Then we have the system
 $$\dot x_1(t) = x_2(t) x_3(t) + u_1(t),$$
$$\dot x_2(t) = x_1(t) x_3(t) + u_2(t),$$
$$\dot x_3(t) = x_1(t) x_2(t) + u_3(t),$$
$$\dot x_4(t) = |u_1(t)| + |u_2(t)| + |u_3(t)|,$$
with no restrictions on the control $u^* \in P_3[0,T]$ which is aimed at bringing the object from the initial point $x(0) = (1, 0, 0, 0)'$ to the final state $x(1) = (0, 0, 0, 1)'$ at the time moment $T = 1$. 

The problem given is reduced to an unconstrained minimization of the functional 
$$
I(x,z,u) = \int_0^1 \Big| z_1 (t) - x_2(t) x_3(t) - u_1(t) \Big| dt + \int_0^1 \Big| z_2 (t) - x_1(t) x_3(t) - u_2(t) \Big| dt + $$
$$+ \int_0^1 \Big| z_3 (t) - x_1(t) x_2(t) - u_3(t) \Big| dt + \int_0^1 \Big| z_4 (t) - |u_1(t)| - |u_2(t)| - |u_3(t)| \Big| dt +$$
$$ + \frac{1}{2} \Big( 1 + \int_0^1 z_1(t) dt\Big)^2 + \frac{1}{2} \Big( \int_0^1 z_2(t) dt\Big)^2 + \frac{1}{2} \Big( \int_0^1 z_3(t) dt\Big)^2 + \frac{1}{2} \Big( \int_0^1 z_4(t) dt - 1\Big)^2 +$$
$$ +\frac{1}{2} \sum_{i=1}^4 \int_0^1 \left( x_i(t) - x_{i}(0) - \int_0^t z_i(\tau) d \tau \right)^2 dt . $$

Take $(x_{(1)}, z_{(1)}, u_{(1)}) = (1+t, t, t, t, 1, 1, 1, 1, 0, 0, 0)'$ as an initial point, then $I(x_{(1)}, z_{(1)}, u_{(1)}) \approx 5.72678$. As the iteration number increased, the discretization rank gradually increased during the solution
of the auxiliary problem of finding the direction of quasidifferential descent described in the algorithm and, in the end, the discretization step was equal to $10^{-1}$ (i. e. $N=10$). At the $58$-th iteration the control $u_{(58)}$ was constructed: \small
%At the $58$-th iteration the point  $(x_{(58)}, z_{(58)}, u_{(58)})$ was constructed with the following coordinates (we give only the trajectories and control for brevity):
%$$x_1 \approx -0.06941 t^4-0.32212 t^3+0.39847 t^2+0.05602 t^5-1.05053 t+1.00008,$$
%$$x_2 \approx -0.15632 t^5+0.05203 t^4+0.49549 t^3-0.55406 t^2+0.1798 t-0.01263,$$
%$$x_3 \approx -0.15632t^5+0.05203 t^4+0.49549 t^3-0.55406 t^2+0.1798 t-0.01263,$$
%$$x_4 \approx 0.06039 t^6-0.53711 t^5+0.94327 t^4-0.30186 t^3-0.21456 t^2+1.05678 t,$$
%$$z_1 \approx -2.88616 t^5+8.96672 t^4-9.30670 t^3+3.00156 t^2+0.10566 t-1.03383,$$
%$$z_2 \approx -0.61364 t^5-0.03477 t^4+1.6876 t^3-1.31869 t^2+0.26533 t-0.00319,$$
%$$z_3 \approx -0.01321 t^5-0.67929 t^4+1.53539 t^3-1.13657 t^2+0.31791 t-0.02598,$$
%$$z_4 \approx 0.36239 t^5-2.68556 t^4+3.77306 t^3-0.90558 t^2-0.42912 t+1.05678,$$

$$u_1(t) \approx -2.88657 t^5+8.96619 t^4-9.30386 t^3+2.99867 t^2+0.10679 t-1.03399,$$ 
$$u_2(t) \approx -0.83764 t^5+0.85068 t^4+0.43135 t^3-0.54058 t^2+0.06933 t+0.00945,$$ 
$$u_3(t) \approx 0.13344 t - 0.01334, \ t \in [0, 0.1), \quad 0.03928 t^3-0.01848 t^2-0.00194 t, \ t \in [0.1, 0.6), $$
$$-0.94481 t^3+2.12646 t^2-1.55331 t+0.36987, \ t \in [0.6, 1],$$ 
\normalsize with the value of the functional $I(x_{(58)}, z_{(58)}, u_{(58)}) \approx 0.00551$, herewith, \linebreak $x_1(T) \approx 0.01251$, $x_2(T) \approx 0.00431$, $x_3(T) \approx 0.00431$, $x_4(T) \approx 1.0069$. For the convenience, the Lagrange interpolation polynomial is given accurately approximating (that is, the interpolation error does not affect the value of the functional and the boundary values)
%and the Hausdorff deviation of the minus superdifferential from the subdifferential presented with a given accuracy) 
%the resulting trajectory.
the resulting control.

Take $u_{(58)}$ as an approximation to the control $u^*$ sought. In order to verify the result obtained and to find the ``true'' trajectory, we substitute this control into the system given and integrate it numerically. As a result, we have the corresponding trajectory (which is an approximation to the one $x^*$ sought) with the values $x_1(T) = 0.00514$, $x_2(T) = 0.00204$, $x_3(T) = 0.00051$, $x_4(T) = 1.00514$, so we see that the error on the right endpoint does not exceed the value $5 \times 10^{-3}$. 
%Herewith, $||G(x, z, u)||_{L^4_2[0,T] \times L^4_2[0,T] \times L^3_2[0,T]} = 0.00757$.

The computational time was 1 min 43 sec. The pictures illustrate the control and trajectories dynamics during the algorithm realization. 

 \begin{figure*}[h!]
\begin{minipage}[h]{0.3\linewidth}
\center{\includegraphics[width=1\linewidth]{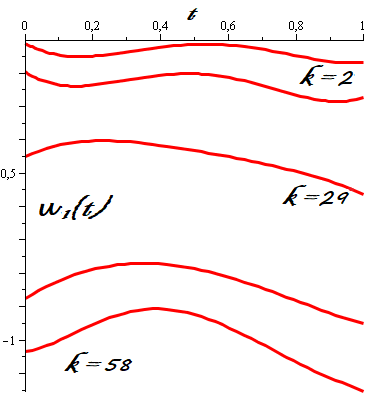} }
\end{minipage}
\hfill
\begin{minipage}[h]{0.3\linewidth}
\center{\includegraphics[width=1\linewidth]{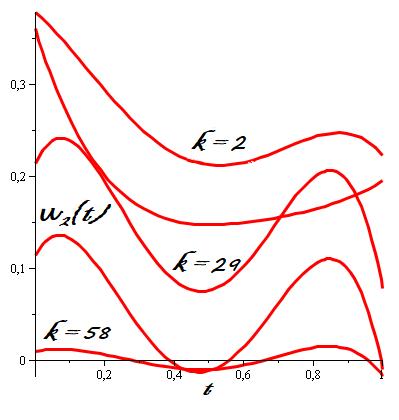} }
\end{minipage}
\hfill
\begin{minipage}[h]{0.3\linewidth}
\center{\includegraphics[width=1\linewidth]{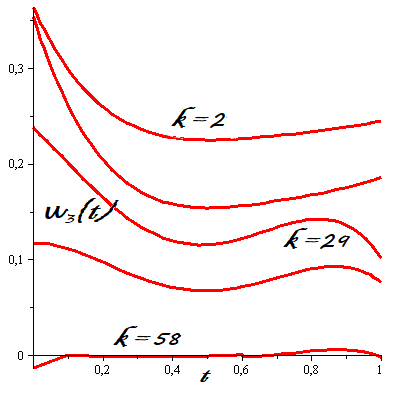} }
\end{minipage}
\caption{Example \ref{ex_4.3}, control on iterations: 2, 13, 29, 45, 58}
%\label{ris:image2}
\end{figure*}

 \begin{figure*}[h!]
\begin{minipage}[h]{0.3\linewidth}
\center{\includegraphics[width=1\linewidth]{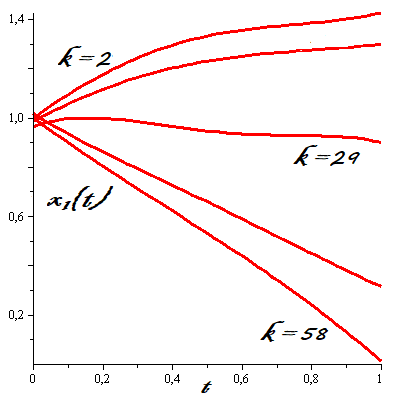} }
\end{minipage}
\hfill
\begin{minipage}[h]{0.3\linewidth}
\center{\includegraphics[width=1\linewidth]{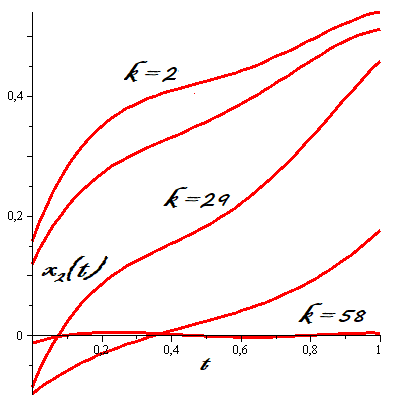} }
\end{minipage}
\hfill
\begin{minipage}[h]{0.3\linewidth}
\center{\includegraphics[width=1\linewidth]{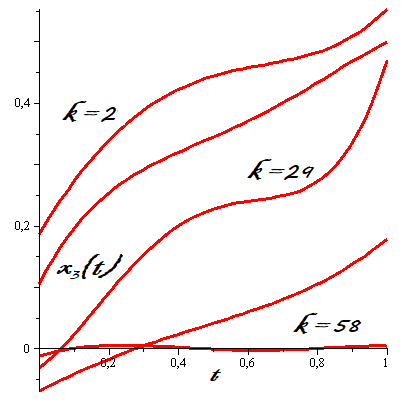} }
\end{minipage}
\caption{Example \ref{ex_4.3}, trajectories on iterations: 2, 13, 29, 45, 58}
%\label{ris:image2}
\end{figure*}

%\begin{figure*}[h!]
%\begin{minipage}[h]{0.5\linewidth}
%\center{\includegraphics[width=0.6\linewidth]{x_3_3} }
%\end{minipage}
%\hfill
%\begin{minipage}[h]{0.5\linewidth}
%\center{\includegraphics[width=0.6\linewidth]{} }
%\end{minipage}
%\caption{Example 3, trajectories on iterations: 2, 13, 29, 45, 58}
%%\label{ris:image2}
%\end{figure*}
\end{example}
%\newpage
%\newpage
\begin{example} \label{ex_4.4}
Consider the system
$$\dot x_1(t) = x_2(t)$$
$$\dot x_2(t) = u(t) - P x_2(t) |x_2(t)| - Q x_2(t).$$

It is required to find such a control $u^* \in U$ which brings this system from the initial point $x(0) = (0, 0)'$ to the final state $x(48) = (200, 0)'$ at the moment $T=48$. Herewith, put $\underline u = -2/3$, $\overline u = 2/3$, i. e. we suppose that $-2/3 \leqslant u(t) \leqslant 2/3$ \, $\forall t \in [0, 48]$. The parameters of the problem are $P = 0.78 \times 10^{-4}$ and $Q = 0.28 \times 10^{-3}$. This problem has a practical application to the optimal train motion and was considered in work \cite{outrata}. In fact, in paper \cite{outrata} a more complicated problem is considered with the functional
$$\mathcal J(x, u) = \int_0^{48} x_2(t) \max\big\{u(t), 0  \big\} dt$$
to be minimized. We try to solve optimal control problem (with this functional) via the approach of the research (see Remark \ref{rm5.5.2}).

The problem given is reduced to an unconstrained minimization of the functional 
$$
I(z,u) = \int_0^{48} z_1(t) \max\big\{u(t), 0  \big\} dt + \lambda_1 \int_0^{48} \Big| z_2 (t) - u(t) + P z_1(t) |z_1(t)| + Q z_1(t) \Big | dt + $$
$$ + \lambda_2 \Big( \int_0^{48} z_1(t) dt - 200\Big)^2 + \lambda_3 \Big( \int_0^{48} z_2(t) dt\Big)^2 + $$
$$ +  \lambda_4 \left( \int_0^{48} \max \Big\{-2/3 - u(t), 0 \Big\} dt + \int_0^{48} \max \Big\{{u}(t) - 2/3, 0 \Big\} dt \right) + $$ 
$$ + \lambda_5 \int_0^{48} \Big( z_1(t) - \int_0^t z_2(\tau) d \tau \Big)^2 dt. $$

The functional is slightly simplified beforehand using the fact that $x_2(t) = z_1(t)$, also put $\displaystyle{x_1(t) = \int_0^t z_1(\tau) d \tau}$, $\displaystyle{x_3(t) = \int_0^t z_3(\tau) d\tau }$, $t \in [0, 48]$, throughout iterations. 
%\begin{wrapfigure}{r}{0.5\textwidth} \begin{center}
 
%    \includegraphics[width=0.48\textwidth]{}
 % \end{center}
    
   %   \end{wrapfigure} 

Take $(z_{(1)}, u_{(1)}) = (0, 0, 0, 0)'$ as an initial point, then $I(z_{(1)}, u_{(1)}) = 2 \times 10^5$. As the iteration number increased, the discretization rank gradually increased during the solution
of the auxiliary problem of finding the direction of the quasidifferential descent described in the algorithm and, in the end, the discretization step was equal to $10^{-1}$ (i. e. $N = 480$). The values of penalty factors~$\lambda_i$, $i = \overline{1, 5}$ gradually increased as well from $(5, 5, 5, 5, 5)$ to $(10, 40, 320, 10, 640)$ (in practice, one just needs to monitor whether the restrictions are met for the selected values of these parameters, and, if necessary, to increase them; however, such and approach somewhat reduces the effectiveness of the method, since some actions are assumed to be carried out ``manually'', so in order to automate the process, it is recommended to use some adaptive (automatic) rules for setting the penalty parameter, discussed, for example, in papers \cite{Byrd}, \cite{Mayne2}). \linebreak At the $4569$-th iteration the control  $u_{(4569)}$ was constructed (see the picture) with the value of the functional $I(z_{(4569)}, u_{(4569)}) \approx 12.49101$, herewith $J(x_{(4569)}, z_{(4569)}, u_{(4569)}) \approx 12.48611$, $x_1(T) \approx 199.99793$, $x_2(T) \approx 0.00289$.
%(we don't give the details here of the ``increasing rule''; briefly speaking, the factors were chosen in order to satisfy the boundary conditions and control constraints with the appropriate accuracy, some receipts on penalty factors choice may be also found in paper \cite{Byrd}). 

%Note that during the method implementation the control was obtained bringing the trajectory from an initial position given to the final point satisfying the right endpoint constraint with the satisfactory accuracy (and delivering a rather small value to the functional minimized as well); however, this control itself violated its own restrictions by the value $2 \times 10^{-2}$ --- $4\times 10^{-2}$ at the beginning and the end of the time interval (see the control dynamics in the picture). Since the exact satisfaction of the restrictions on control is important for applications, on several last iterations it was decided to improve the accuracy via multiplying the $6$-th and the $7$-th summands of the functional by a large factor (of order $10^2$) and minimizing this modified functional during the last iterations. 
%and the Hausdorff deviation of the minus superdifferential from the subdifferential presented with a given accuracy) 

After carrying out these iterations the functional value practically stopped decreasing so it was decided to terminate the process at this point. Herewith, $||G(z, u)||_{L^2_2[0,T] \times L^2_1[0,T]} = 0.3$.

Take $u_{(4569)}$ as an approximation to the control $u^*$ sought. In order to verify the result obtained and to find the ``true'' trajectory, we substitute this control into the system given and integrate it numerically. As a result, we have the corresponding trajectory (which is an approximation to the one $x^*$ sought) with the values \linebreak $x_1(T) \approx 199.99607$, $x_2(T) \approx 0.00288$, so we see that the error on the right endpoint does not exceed the value $3 \times 10^{-3}$; the error on the control does not exceed the value $7 \times 10^{-5}$ at each $t \in [0, T]$. The corresponding value of the functional is $\mathcal J(x, u) = 12.48832$.  Due to the minimized functional structure we can not guarantee that the functional value obtained on the $4569$-th iteration is a global minimum in this problem.

The computational time was 38 min 5 sec. The pictures illustrate the resulting control and trajectories obtained via the algorithm realization.
% \begin{figure*}[h!]
%\begin{minipage}[h]{0.3\linewidth}
%\center{\includegraphics[width=1\linewidth]{} }
%\end{minipage}
%\hfill
%\begin{minipage}[h]{0.3\linewidth}
%\center{\includegraphics[width=1\linewidth]{} }
%\end{minipage}
%\hfill
%\begin{minipage}[h]{0.3\linewidth}
%\center{\includegraphics[width=1\linewidth]{} }
%\end{minipage}
%\caption{Example 4, control and trajectories on iterations: 20, 50, 100, 150, 175, 200, 216}
%%\label{ris:image2}
%\end{figure*}

 \begin{figure*}[h!]
\begin{minipage}[h]{0.3\linewidth}
\center{\includegraphics[width=1\linewidth]{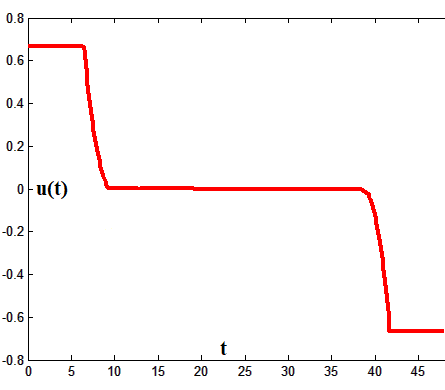} }
\end{minipage}
\hfill
\begin{minipage}[h]{0.3\linewidth}
\center{\includegraphics[width=1\linewidth]{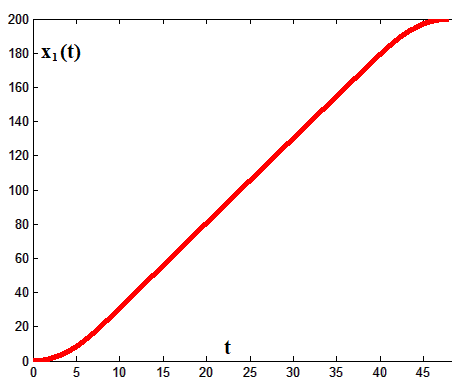} }
\end{minipage}
\hfill
\begin{minipage}[h]{0.3\linewidth}
\center{\includegraphics[width=1\linewidth]{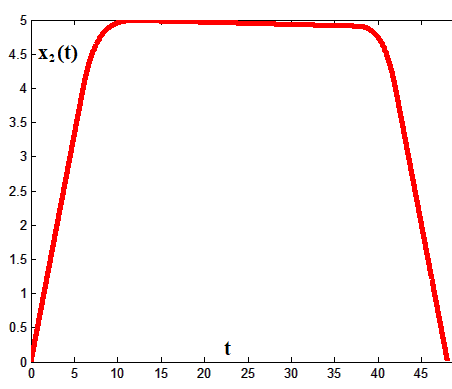} }
\end{minipage}
\caption{Example \ref{ex_4.4}, the resulting control and trajectories}
%\label{ris:image2}
\end{figure*}
\end{example}
\begin{remark}
Note that in the examples considered, if one suppposes the error on the right endpoint of order $3 \times 10^{-2}$ -- $5 \times 10^{-2}$ to be satisfactory, then the computational time may be reduced at at least two times. This is explained by the fact that the method rather rapidly obtains the localization of solution but after that a lot of iterations may be required to improve the result; what is customary for gradient-type optimization methods. 
\end{remark}
\bigskip
\bigskip
\bigskip

Let us give the solution of this example obtained via an effective DC method \cite{strekal}, \cite{DolgopolikDC} applied to the corresponding finite dimensional problem after direct discretization. Under direct discretization we mean the Euler scheme applied to the system of differential equations and direct left integral Riemann sums substituting the corresponding integrals. Herewith, the discretized functional $\mathcal J(x, u)$ is the objective function and there are constraints in the form of difference scheme equations, restrictions on the right endpoint (the left endpoint is taken from known values) and control. The discretization step value $0.1$ is taken. With such a rank of discretization we obtain the DC optimization problem of dimension $1438$ with~$439$ DC equality constraints. The linear interpolation was used to obtain the corresponding control and trajectory from the values obtained at discrete time moments (see the picture). 
\begin{wrapfigure}{r}{0.5\textwidth} \begin{center}
     \includegraphics[width=0.48\textwidth]{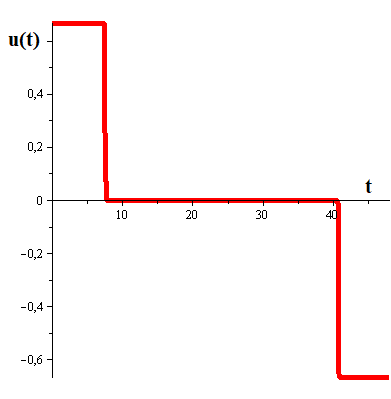}
  \end{center}    
      \end{wrapfigure}
The control obtained delivers the integral functional $\mathcal J(x,u)$ the value $12.59657$. So we see that this result obtained is comparable with the result obtained via present paper approach. However, the error on the right endpoint is rather noticeable (the values \linebreak $x_1(T) = 201.55618$ and $x_2(T) = 0.06521$ are obtained). Note that as one can check the constraints of the discretized problem are satisfied with the accuracy of order $10^{-8}$ on the control obtained, so it is unclear how the restrictions on the right endpoint may be improved to the appropriate values under the difference scheme chosen (note, however, that another difference scheme could certainly significantly improve the accuracy). Apparently, in order to achieve that, one has to take a smaller discretization step. However, note that the further increasing of the discretization rank leads to the dimension of the order $10^4$ (or more), and it seems that (even convex) problems of such a dimension lead to difficulties in ``standard'' machine calculations in general case (unless such problems have a special structure, etc.).
\newline
\newline

Note that if piecewise constant (instead of piecewise linear) interpolation of controls and state derivatives is implemented in DC algorithm, then the result may be significantly improved (the objective function value obtained is approximately $12.43006$ with the error on the right endpoint is $3 \times 10^{-3}$). On the other hand, the method of the paper can be easily modified in order to generate piecewise controls and state derivatives (and continuous piecewise linear trajectories) as well; so it would be correct to compare piecewise constant politics interpolation in both methods. Also note, that in many problems it is natural to obtain continuous controls and phase derivatives from physical considerations. Like this, in the example considered it is natural to suppose that the train speed can not change immediately. 

Note that DC algorithms require a d. c. decomposition \cite{strekal} of the functions; although in this example it is not difficult to obtain such a decomposition, in general case this is a drawback of DC methods unlike the method developed. Note that due to the arbitrary choice of an element of a superdifferential during DC algorithm realization \cite{strekal}, a disadvantage arises when this algorithm may stuck in so called critical points (which are not even stationary) of a functional; although in this example DC method did not stuck in such a critical point, in general case this is a drawback of DC methods unlike the method developed. On the other hand note that DC algorithms are rather effective and may succesfully solve many practical problems and be rather fast. 

\section{Discussion}
First of all, let us briefly explain why our novel idea of the variables $x$ and $z$ ``separation'' is crucial. If this trick is not implemented, on some iteration $k$ of the functional $\mathcal I(z, u)$ minimization algorithm one has to solve the following problem: \begin{equation} \label{d1} \max_{w \in \overline{\partial} \mathcal I(z_{(k)}, u_{(k)}) } \min_{v \in \underline{\partial} \mathcal I(z_{(k)}, u_{(k)}) } \int_0^T \big(v(t) + w(t)\big)^2 dt. \end{equation} However, if one calculates the functional  $\mathcal I(z_{(k)}, u_{(k)})$ quasidifferential than it is seen that the integrand of functional in expression (\ref{d1}) contains, in general case, the functions of the form $\displaystyle \int_0^t {V}(\tau) d \tau$, $\displaystyle \int_0^t {W}(\tau) d \tau$, $t \in [0, T]$. These are Aumann integrals, because $V(\tau)$, $W(\tau)$ belong to some compact sets at each $\tau \in [0, t]$ (and other conditions required of the Aumann integral definition are satisfied as well). It is unclear how to choose the functions $V(t)$, $W(t)$ in this case in order to solve problem (\ref{d1}). The idea implemented allows to get rid of such Aumann integrals in the quasidifferential structure and to solve problem $$\max_{w \in \overline \partial \varphi(\xi_{(k)}, t)} \min_{v \in \underline \partial \varphi(\xi_{(k)}, t)} \big( v(t) + w(t) \big)^2 $$ at each point $t \in [0, T]$ (see (\ref{5.20}) and justification therein). Note that by applying discretization and considering individual selectors of the Aumann integrals at discrete moments of time, problem (\ref{d1}) can be reduced to a finite-dimensional maxmin problem as well, however, the (initial) presence of the indicated Aumann integrals does not allow one to proceed to pointwise calculations, so the finite-dimensional problem obtained in this way becomes extremely computationally expensive (practically unrealizable in the general case even for a small dimension of the original problem). It is necessary to fix the variable under the maximum and to solve the problem for the minimum for each such fixed variable. In this case, for each discretization moment $t_i$, one can take any vector $w(t_i)$ from the set determined by the superdifferential $\overline{\partial} \mathcal I(z_{(k)}, u_{(k)})$ at the time moment $t_i$. For any reasonable rank of discretization, the number of options for fixing the variable under the maximum in such a way (and, accordingly, the number of problems for the minimum obtained in this way) becomes boundless.

The theoretical advantage of the method proposed is as follows: it is original as it is qualitatively different from existing methods based on the direct discretization of the initial problem. Besides, the method preserves attractive geometrical interpretation of quasidifferentials (see book \cite{demrub} for more examples with geometrical illustration in the finite-dimensional case). 

The method proposed has the following practical advantages. The following four paragraphs give examples of some specific problems demonstrating these advantages. In order to simplify the presentation and just to get essence we give examples of some problems of calculus of variations and an example of one simplest control problem only in a smooth case.

Consider the problem of minimizing the functional
$$J(x, z) = \int_0^1 z^4(t)/48 + z^2(t) + x^2(t) - 6x(t) \, dt$$
under the constraints
$$x(0) = 1, \quad x(1) = 0, \quad \int_0^1 x(t) \, dt = 2/3.$$ 
In this example the steepest descent method appeared to be very effective. On the contrary: in order to construct approximations by the Ritz-Galerkin method, a lot of calculations are required and it is also necessary to solve essentially non-linear systems with parameters. 

In a problem of minimizing the 
the functional
$$J(z) = \int_0^{10} z^2(t)- x^2(t) \, dt$$
under the constraints
$$x(0) = 0, \quad x(10) = 0$$ 
both the Euler equation and the Ritz-Galerkin method give a trajectory delivering neither a strong nor a weak minimum. The steepest descent method ``points'' to the fact that there is no solution in this problem: the one-dimensional minimization problem has no bounded solution. 

Minimize the functional
$$J(z) = \int_0^2 z^3(t) \, dt$$
under the constraints
$$x(0) = 0, \quad x(2) = 4.$$ 
This example illustrates that the steepest descent method ``points'' to the fact that, on a solution obtained, the functional reaches a weak minimum rather than a strong one, while both the Euler equation and the Ritz-Galerkin method give only a trajectory delivering a weak minimum.

All the details have been omitted for brevity. One can find a detailed description of these problems and more interesting examples as well as justification of the statements posed in the original papers \cite{demtam}, \cite{tamimm}. Note also that a method used in these papers is slightly different from the one presented but it preserves its many properties, so the comparative analysis is correct. 

\begin{wrapfigure}{r}{0.5\textwidth} \begin{center}
      \includegraphics[width=0.4\textwidth]{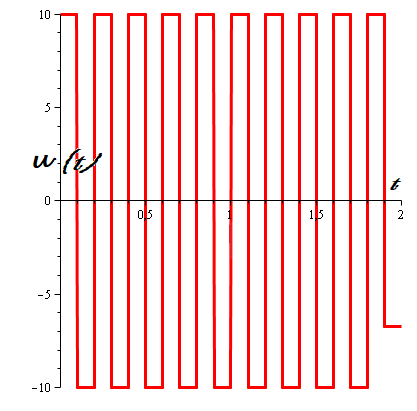}
  \end{center} 
  \end{wrapfigure}

Consider the system
$$\dot x(t) = -x(t) + u(t)$$
on the time interval $[0, 2]$. It is required to find a control $u^* \in P_1[0,T]$ such that the corresponding trajectory satisfies the boundary conditions
$$x(0) = x(2) = 0.$$
\bigskip

Apply direct discretization to this system via the formula
$x(i+1) - x(i) = \frac{1}{N} (-x(i) + u(i))$,
where $i = \overline{0,19}$ and the discretization rank $N=20$.
If we use the initial condition $x_0 = x(0) = 0$ and calculate $x_{20} = x(2)$ as an explicit function of the variables $u_i$, $i = \overline{0,19}$, we will obtain \small $$x_{20}(u) \approx 0.01886768013 \, u_0 + 0.01986071592 \, u_1 + 0.02090601676 \, u_2 + \, 0.0220063334 \, u_3 +$$ $$+0.02316456151 \, u_4 + 0.02438374896 \, u_5 + 0.02566710416 \, u_6 + 0.02701800438 \, u_7 + $$
$$+ 0.02844000461 \, u_8 + 0.02993684696 \, u_9 + 0.03151247049 \, u_{10} + 0.03317102156 \, u_{11} +$$ $$+ 0.03491686480 \, u_{12} + 0.03675459453 \, u_{13} + 0.03868904688 \, u_{14} + 0.04072531250 \, u_{15} +$$ $$+ 0.04286875000 \, u_{16} + 0.045125 \, u_{17} + 0.0475 \, u_{18} + 0.05 \, u_{19}.$$ 
\normalsize So in order to get the required finite position, one has to solve the equation $x_{20}(u) = 0$ with respect to the variables $u_i$, $i = \overline{0,19}$. In other words, it is required to minimize the functional $|x_{20}(u)|$. Take the initial point $u_{(0)}$ with the following coordinates: $u_{2i}=10$, $i=\overline{0,9}$, $u_{2i+1}=-10$, $i=\overline{0,8}$, $u_{19} \approx -6.7101842175$ (see the picture). Note that $x_{20}(u_{(0)}) = 0$, so the point $u_{(0)}$ delivers a global minimum to the functional $|x_{20}(u)|$ (i.~e. the point $u_{(0)}$ solves the discretized problem). However, if we substitute this control $u_{(0)}$ into the original system, we will get the corresponding trajectory $x_{(0)}(t)$ with the finite value $x(2) \approx -0.1189349683$.

Now try to solve this problem via the method of the paper, i. e. minimize the functional
$$I(x,z,u) = \frac{1}{2}\int_0^2 \left(z(t)+x(t)-u(t)\right)^2 dt + \frac{1}{2} \int_0^2 \left( x(t) - \int_0^t z(\tau) d\tau \right)^2 dt + \frac{1}{2} \left(\int_0^2 z(t) dt \right)^2$$
(for simplicity of presentation we have taken a square-function instead of the abs-function as the integrand in the first summand). Take the same point $(x_{(0)}, z_{(0)}, u_{(0)})'$ (where $z_{(0)} = \dot x_{(0)}$) as an initial approximation. One can check that on the first iteration we will get such a control that the corresponding trajectory takes the finite value $|x(2)| < 0.11894$ (i. e. ``better'' than that one obtained via discretization method). 

In fact, applying the method to this example one can get a solution with any given accuracy. This is due the fact that the Gateaux gradient in this case is as follows:
 $$\nabla I(x, z, u) = \begin{pmatrix}
\big(z(t)+x(t)-u(t)\big) +\displaystyle{ \left(x(t) - \int_0^t z(\tau) d \tau \right)} \\
\big(z(t)+x(t)-u(t)\big) - \displaystyle{\left(\int_t^2 \Big( x(\tau) - \int_0^\tau z(s) ds \Big) d\tau \right)} + \int_0^2 z(t) \, dt \\
-\big(z(t)+x(t)-u(t)\big)
\end{pmatrix}.
$$   
Hence, the stopping criteria (that is $||\nabla I(x^*, z^*, u^*)||_{L^2_1[0,2] \times L^2_1[0,2] \times L^2_1[0,2]}  = 0$ in this case) may be only fulfilled (with some accuracy) when the third component of the gradient vanishes. This fact implies that the second summand in the first component vanishes. This fact finally implies that the third summand in the second component vanishes. Thus, we see that there are no local minima of the functional considered, so our method will lead to the desired solution (with any given accuracy). Roughly speaking, the method proposed ``analyzes'' the ``behavior'' of the whole trajectory, rather than only its points considered at some discrete moments of time.   

Although in the example the initial control is chosen in a special way, one can check there is a ``huge'' number of controls with the same properties (i. e. delivering a global minimum to the functional $|x_T(u)|$ but giving an error to the right endpoint). Both increasing the time moment $T$ and adding a nonsmoothness into the right-hand side of the system (e. g. taking the function $-|x(t)|$ instead of $-x(t)$ now) will only increase this ``huge'' number. Of course, while increasing discretization rank, one can decrease the error on the right endpoint. On the other hand, even the discretization rank taken seems to be not very high for control problems solved via discretization or a ``control parametrization'' technique (see, e. g. \cite{teogoh}). Besides, for any discretization rank taken the initial control may be chosen in such a way that it will deliver the global minimum to the functional minimized (i. e. $x_T(u_{(0)}) = 0$) but give an arbitrary large error to the right endpoint.  

As noted, only the smooth case in the examples given is considered for simplicity. A nonsmooth case may lead to even more difficulties: for example, in the control problem presented any kind of nondifferentiability in the right-hand side will significantly increase the number of ``local'' minima with direct discretization used. 

Also note that although some particular concrete examples were given in this section, they demonstrate the general disadvantages of the methods known, so the ``difficulties'' with a lot of nontrivial calculations in Ritz-Galerkin method, ``uninformativeness'' of Euler equations, sufficiently large errors on the right endpoint in discrete methods, etc. may be met in many practical problems.   

List also some secondary advantages of the method proposed:

1) although large discretization rank gives a good approximation to the original problem, the choice of the appropriate discretization rank is not straightforward;

2) in many cases the quasidifferential descent method rapidly demonstrates the structure of the desired solution, although then the convergence to this solution may be very slow;

3) any integral restriction (for example, $||u(t)||^2_{L^2_m [0, T]} \leqslant C$, $C \in \mathbb R$ given) will generate a complicated constraint with a large number of variables (equal to the discretization rank) after direct discretization is applied to the problem; on the contrary: the integral restriction is very natural for the variational statement of the problem solved and it is easy to add a corresponding summand to the functional $I(x, z, u)$ in order to take this restriction into account.

4) if from physical or other considerations, only continuous (in contrast to piecewise continuous) controls and state derivatives are preferable, then the method developed is expected to give better results than the discrete ones (as the discrete method solves the difference scheme system and ``doesn't mind'' the ``behavior'' of the variables between the points of discretization).

5) the algorithm is constructed in such a way that instead of a one problem of large dimension of order $N (n+m)$ (obtained, e. g., from direct discretization of the initial problem) with the discretization rank $N$ one has to solve $N$ problems of dimension of order $2n+m$ of the initial problem which seems more preferable from the computational point of view; the descent directions $G((x_{(k)}, z_{(k)}, u_{(k)}), t_i)$ on the $k$-th iteration are calculated independently for each time moment $t_i$ of discretization, $i = \overline{1, N}$, hence the parallel calculations may be implemented.
\bigskip  

The main disadvantage of the method presented reduces to the computational effort: the number of iterations may be very large. On the other hand, the execution time per one iteration is rather short, so the total time of the algorithm seems satisfactory. 
\bigskip

 \begin{remark}
As noted above, when choosing a step value according to the steepest descent method rule, the convergence of the quasidifferential descent method (MQD) cannot be guaranteed. However, known circumstances allow us to say that ``as a rule'' this method leads to a stationary point of the minimized functional. See also \linebreak Remark \ref{convergence}. 
\end{remark}

%Описанные обстоятельства позволяют говорить о том, что <<как правило>> данный метод приводит к стационарной точке минимизируемого функционала. 
\begin{remark} \label{rm_100} 
The most significant advantage of the applied idea of ``separation'' of variables (which has both theoretical and practical implications) is the minimum conditions obtained in pointwise form. 

The theoretical significance lies in the fact that the currently known minimum conditions, for example, in the works of A. D. Ioffe, B. Sh. Mordukhovich, \linebreak E. S. Polovinkin, F. H. Clarke and others (in a similar form for different problems) contain an unknown (``conjugate'') function, which is a solution to a special differential inclusion. Checking these conditions even for a given trajectory (and the control that generates it) presents significant difficulties and can not always be carried out (since due to the difficulties while calculating the corresponding subdifferential in the general case it is unclear how to look for the conjugate vector-function, which is a solution to a complex differential inclusion, even with using sampling). The verification of the minimum conditions obtained in the paper is universal in the sense that it can be carried out for the specific control and the corresponding trajectory (at separate times of discretization), albeit for an ``approximate'' problem (at large values of the penalty parameters). In this regard, note the minimum conditions obtained in the works of \linebreak M. V. Dolgopolik in terms of quasidifferentials, which are also ``algorithmically'' verifiable in the general case.

The practical significance lies in the possibility of constructing a numerical method. (We note that when constructing numerical methods based on the above-mentioned known minimum conditions, in the general case of quasidifferentiable functions fundamental difficulties arise.) Moreover, in the proposed numerical method the descent directions in the algorithm can be searched independently of each other at individual moments of discretization. Thus, instead of one problem of large dimension $N(n+m)$ (obtained if we apply direct discretization), one needs to solve $N$ problems of smaller dimension $2n+m$, where $N$ is the discretization rank, and $n$ is the dimension of the phase vector and $m$ is the dimension of control. This ``replacement'' seems promising from a computational point of view; in addition, due to the mutual independence noted of these problems, parallel calculations can be used.
\end{remark}

\begin{remark} \label{convergence} Let us make a remark regarding the quasidifferential descent (MQD) method, which is often used to solve various problems of the paper. In the case of choosing a step according to the steepest descent method rule, the convergence of the MQD cannot be guaranteed. Even in the finite-dimensional case, there are examples when the implementation of this method causes a "jamming" effect, which leads to convergence to a nonstationary point of the minimized function. However, the structure of such functions in the constructed examples \cite{demmal} is very specific. This specificity consists, in particular, in the fact that if the descent step according to the rule given is calculated not exactly, but with an arbitrarily small error, and also if at some point the functions are considered active also with some error (which is what happens in practice), then the method will converge to a stationary point (see the corresponding modifications and justification of the convergence of the method for a certain class of finite-dimensional problems in \cite{demmal} and \cite{demgamsil} and in works \cite{Fominyhmin}, \cite{Fominyhmax}). In practice, the MQD has shown itself positively (from the point of view of convergence) and has been successfully applied to solving specific problems. The circumstances described allow us to say that ``as a rule'' this method leads to a stationary point of the minimized functional. The proof of the ``weak'' convergence of the MQD and the modification of the MQD for a special class of functionals, namely, those containing a minimum or maximum of continuously differentiable functions under the integral, is given in works \cite{Fominyhmin}, \cite{Fominyhmax}. \end{remark}

\section{Conclusion}
The paper is devoted to developing a direct ``continuous'' method for a nonsmooth control problem. The problem of bringing a system with a nondifferentiable (but only quasidifferentiable) right-hand side from one point to another is considered. The admissible controls are those from the space of piecewise continuous vector-functions which belong to some parallelepiped at each moment of time. The proposed approach can be applied to nonsmooth optimal control problem in Lagrange form (additionally the integral with a quasidifferentiable integrand is to be minimized). The problem of finding the steepest (the quasidifferential) descent direction was solved and the quasidifferential descent method was applied to some illustrative examples. The method is original and is qualitatively different from the existing methods since the majority of them are based on direct discretization of the original problem. The main and new idea implemented is to consider phase trajectory and its derivative as independent variables and to take the natural relation between these variables into account via penalty function of a special form. This idea gives possibility 1) to obtain qualitatively new optimality conditions in nonsmooth optimal control problem in pointwise form (which may be effectively checked at discrete time moments), 2) to calculate the quasidifferential of the minimized functional and eventually to construct the steepest descent direction method for solving this problem. 

\section* {Acknowledgements}
The author is sincerely greatful to his colleagues Maksim Dolgopolik and Grigoriy Tamasyan for numerous fruitful discussions.

\end{document}